\documentclass[10pt,a4paper]{amsart}
\usepackage[cp1250]{inputenc}
\usepackage{amsmath}
\usepackage{amsfonts}
\usepackage{amssymb}

\usepackage{amsthm}

\newtheorem{theorem}{Theorem}[section]
\newtheorem{lemma}[theorem]{Lemma}
\newtheorem{proposition}[theorem]{Proposition}
\newtheorem{corollary}[theorem]{Corollary}

\theoremstyle{definition}
\newtheorem{definition}[theorem]{Definition}
\newtheorem{remark}[theorem]{Remark}
\newtheorem*{acknowledgment}{Acknowledgment}

\newcommand{\R}{\mathbb{R}}
\newcommand{\N}{\mathbb{N}}
\newcommand{\M}{\mathcal{M}}
\newcommand{\rec}{\operatorname{rec}}
\newcommand{\spann}{\operatorname{span}}
\newcommand{\loc}{\operatorname{loc}}

\begin{document}

\title{Functions on a convex set which are both $ \omega $-semiconvex and $ \omega $-semiconcave II}
\author{Václav Kryštof}

\address{Charles University, Faculty of Mathematics and Physics, Sokolovská 83, 186 75 Praha 8, Karlí­n, Czech Republic}

\email{vaaclav.krystof@gmail.com}

\keywords{Semiconvex function with general modulus, semiconcave function with general modulus, $ C^{1,\alpha} $ function, $ C^{1,\omega} $ function, unbounded open convex set.}

\subjclass{26B25.}

\begin{abstract}
In a recent article (2022) we proved with L. Zajíček that if $ G\subset\R^n $ is an unbounded open convex set that does not contain a translation of a convex cone with non-empty interior, then there exist $ f:G\to\R $ and a concave modulus $ \omega $ such that $ \lim_{t\to\infty}\omega(t)=\infty $, $ f $ is both semiconvex and semiconcave with modulus $ \omega $ and $ f\notin C^{1,\omega}(G) $. Here we improve the previous result as follows: If $ G $ is as above and $ \omega(t)=t^{\alpha} $ for some $ \alpha\in(0,1) $, then there exists $ f:G\to\R $ that is both semiconvex and semiconcave with modulus $ \omega $ and $ f\notin C^{1,\alpha}(G) $. This result has immediate consequences concerning a first-order quantitative converse Taylor theorem and the problem whether $ f\in C^{1,\alpha}(G) $ whenever $ f $ is smooth in a corresponding sense on all lines.
\end{abstract}

\maketitle

\section{Introduction}

If $ G\subset\R^n $ is an open convex set, then $ f:G\to\R $ is said to be (classically) semiconvex if there exists $ C\geq 0 $ such that the function
\begin{align*}
x\mapsto f(x)+C\vert x\vert^2,\quad x\in G,
\end{align*}
is convex. We say that $ f:G\to\R $ is (classically) semiconcave if $ -f $ is semiconvex. Then it is well-known (see \cite[Section 3]{KZ} for references) that
\begin{align}\label{eq_classic}
f:G\to\R\;\textnormal{is both semiconvex and semiconcave}\;\Longleftrightarrow f\in C^{1,1}(G).
\end{align}

The notion of semiconvex, resp. semiconcave, functions is generalized to the notion of funtions that are semiconvex, resp. semiconcave, with modulus $ \omega $ (see \cite[Definition 2.1.1]{CS}). We will call these functions $ \omega $-semiconvex, resp. $ \omega $-semiconcave, for short (see Definition \ref{de_modulus} and Definition \ref{de_semiconvex} below). Note that a function is semiconvex, resp. semiconcave, if and only if it is $ \omega $-semiconvex, resp. $ \omega $-semiconcave, with $ \omega(t)=Ct $ for some $ C\geq 0 $ (see \cite[Proposition 1.1.3]{CS}). For the theory and application of $ \omega $-semiconvex, resp. $ \omega $-semiconcave, functions, see the monograph \cite{CS}.

This article deals with functions that are both $ \omega $-semiconvex and $ \omega $-semiconcave. Let $ G\subset\R^n $ be an open convex set and $ \omega $ a modulus. It is easy to show that if $ f\in C^{1,\omega}(G) $, then there exists $ C>0 $ such that $ f $ is both $ (C\omega) $-semiconvex and $ (C\omega) $-semiconcave (see \cite[Proposition 2.1.2]{CS}). So we will consider the natural problem whether also the opposite implication holds. This implication can be equivalently formulated as follows:
\begin{align}\label{eq_basic}
f:G\to\R\;\textnormal{is both}\;\omega\textnormal{-semiconvex and}\;\omega\textnormal{-semiconcave}\Longrightarrow f\in C^{1,\omega}(G).
\end{align}

Recall (see \eqref{eq_classic}) that implication \eqref{eq_basic} holds if $ \omega(t)=Ct $ for some $ C\geq 0 $. Further, by Remark \ref{re_continuous} (i) below and \cite[p. 839]{KZ}, implication \eqref{eq_basic} holds if either $ G $ is bounded or contains a translation of a convex cone with non-empty interior. However, \eqref{eq_basic} doesn't hold in general. Namely, we proved (see \cite[Theorem 1.9]{KZ}) that if $ G\subset\R^n $ is an unbounded open convex set which does not contain a translation of a convex cone with non-empty interior, then there exists a concave modulus $ \omega $ such that $ \lim_{t\to\infty}\omega(t)=\infty $ and \eqref{eq_basic} doesn't hold. The main disadvantage of \cite[Theorem 1.9]{KZ} is that $ \omega $ depends on $ G $ and may be irregular. In this article we improve \cite[Theorem 1.9]{KZ} in the following way:

\begin{theorem}\label{th_1}
Let $ G\subset\R^n $ ($ n\geq 2 $) be an unbounded open convex set that doesn't contain a translation of a cone with non-empty interior. Let $ \alpha\in (0,1) $ and set $ \omega(t):=t^\alpha $ for all $ t\geq 0 $. Then there exists $ f:G\to\R $ that is both $ \omega $-semiconvex and $ \omega $-semiconcave and $ f\notin C^{1,\alpha}(G) $.
\end{theorem}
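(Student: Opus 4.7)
The plan is to construct $f$ as an infinite sum $\sum_k g_k$ of smooth bumps supported on pairwise disjoint elongated cells $U_k\subset G$ escaping to infinity, exploiting the fact that the absence of a translate of a cone with nonempty interior forces arbitrarily long-and-thin convex subregions in $G$. The modulus $\omega(t)=t^{\alpha}$ will govern the admissible trade-off between the longitudinal and transverse derivative variations of each bump.

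First I would reduce to dimension two. Since $\rec(G)$ has empty interior, it lies in some hyperplane, and a suitable two-dimensional affine slice $G_0\subset\R^2$ of $G$ is an unbounded open convex set whose recession cone has empty interior in $\R^2$. A function $f_0\colon G_0\to\R$ with the desired properties can be lifted to $G$ via an affine projection (composing with a linear map that sends $G$ into $G_0$), so it suffices to produce $f_0$. Inside $G_0$ I choose disjoint cells $U_k$ which in local orthonormal coordinates $(\xi,\eta)$ look approximately like $[0,a_k]\times[-h_k,h_k]$, with $a_k\to\infty$ and $h_k/a_k\to 0$ at a rate to be determined by $\alpha$; the no-cone hypothesis is what makes such cells available with unbounded anisotropy. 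On each $U_k$ I define $g_k(\xi,\eta):=\chi_k(\xi,\eta)\,\eta\,b_k(\xi)$, where $\chi_k$ is a smooth cutoff localising $g_k$ to $U_k$ (with a buffer inside $G_0$) and $b_k\colon[0,a_k]\to\R$ is a smooth profile. Then $\partial_\eta g_k=\chi_k b_k$ varies by up to $A_k:=\|b_k\|_\infty$ across $U_k$, producing a gradient jump $\sim A_k$ over distance $\sim a_k$; the failure of $C^{1,\alpha}$ will follow by arranging $A_k/a_k^{\alpha}\to\infty$.

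The key calibration step is to ensure each $g_k$ is $\omega$-semiconvex and $\omega$-semiconcave on $G_0$ with a constant independent of $k$; equivalently, the restriction of $g_k$ to every line segment inside $G_0$ has $\alpha$-H\"older first derivative with uniform constant. The crucial observation is that for a line staying inside $U_k$ over a long arc-length, thinness forces its direction to be nearly longitudinal, so the $\eta$-component of the direction is small and the potentially large value of $\partial_\eta g_k$ only enters a directional derivative with a small coefficient. Computing $\frac{d}{dt}g_k(\xi_0+tv_\xi,\eta_0+tv_\eta)$ explicitly produces terms of the form $v_\eta b_k$, $v_\xi\eta\, b'_k$, $v_\xi v_\eta b'_k$ and $v_\xi^2\eta\, b''_k$; balancing them against the required bound $C|v|^{1+\alpha}|t_1-t_2|^{\alpha}$ leads to dimensional inequalities of the type $h_k\|b'_k\|_\infty\lesssim a_k^{\alpha}$ and $h_k\|b'_k\|_{C^{0,\alpha}}\lesssim 1$, and a suitable choice of $b_k$ as a single slow-oscillation bump of wavelength $s_k$ with a carefully chosen amplitude still leaves $A_k/a_k^{\alpha}\to\infty$.

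The main obstacle is this calibration together with the geometric input. In \cite{KZ} the modulus $\omega$ could be adapted to $G$, so the parameters $a_k,h_k$ were free to be matched to an ad-hoc modulus; here $\omega(t)=t^{\alpha}$ is fixed in advance and the no-cone hypothesis is the only structural information available on $G$, so one must verify that $G_0$ always admits cells of the required anisotropy for an arbitrary $\alpha\in(0,1)$. The remaining steps are then comparatively routine: assembling $f_0=\sum_k g_k$, using scale-separation and disjointness of the cells to deduce uniform $\omega$-semi-both for the sum (the inequality for pairs of points in distinct cells reduces, after noting that $g_k$ and $g_l$ vanish at the other cell, to the individual bounds), and exhibiting the $C^{1,\alpha}$-violation from two points at opposite ends of each $U_k$.
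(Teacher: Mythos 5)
There is a genuine gap, and it sits exactly at the point you call the ``key calibration step'': the transverse cutoff makes the construction impossible, no matter how $b_k$ is chosen. Since $\chi_k$ localises $g_k$ to $U_k$ with a buffer \emph{inside} $G_0$, at an abscissa $\xi^*$ where $|b_k(\xi^*)|=A_k$ there is a transverse segment contained in $G_0$ joining a point where $\chi_k\equiv 1$ (so the transverse partial derivative of $f$ equals $\pm A_k$) to a point of $G_0$ outside the support of $\chi_k$ (where it is $0$); its length is at most the cutoff transition width, hence at most the local half-width of $G_0$ at that abscissa, say $\eta(X_k)$. If $f$ were both $(C\omega)$-semiconvex and $(C\omega)$-semiconcave, Lemma \ref{le_lines}\,(ii) applied to this segment forces $A_k\leq C'\eta(X_k)^{\alpha}$. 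On the other hand your violation needs $A_k/a_k^{\alpha}\to\infty$, so you would need $a_k\ll\eta(X_k)$, i.e.\ cells much \emph{shorter} than the local width of the domain; but then a ball of radius comparable to $a_k$ around the cell lies in $G_0$, and on such a ball semiconvexity plus semiconcavity with modulus $\omega$ does yield the gradient estimate $|f'(p)-f'(q)|\leq C''|p-q|^{\alpha}$ with a universal constant (this is the bounded/cone case recalled before \eqref{eq_basic}), contradicting $A_k\gg a_k^{\alpha}$. So any bump that dies out transversally strictly inside $G_0$ cannot carry a large transverse-derivative drift; the thinness of the domain itself must play the role of the cutoff. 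This is precisely why the paper's construction is the global product $f(x,y)=g(x)y$ on the thin set $G_\eta$, with \emph{no} cutoff, and why all the difficulty is concentrated in choosing $g$: the binding constraints are the width-weighted pointwise conditions of Lemma \ref{le_g1}, namely $\eta(x)|g'(x)|\leq C\omega(\eta(x))$, $\eta(x)|g'(x)-g'(x+h)|\leq C\omega(h)$ for all $h>0$, and the increment bound \eqref{eq_g1_1} — not inequalities of the form $h_k\|b_k'\|_\infty\lesssim a_k^{\alpha}$ involving only the cell dimensions — and satisfying them while $g$ still outgrows every $C\omega$ along the axis is what requires the modulus $\omega_{\eta}$, its concave envelope and Lemmas \ref{le_envelope}--\ref{le_g2}, together with Lemma \ref{le_condition} to see that $\omega_{\eta}$ beats $t^{\alpha}$.

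The reduction to the plane as you describe it is also not sound as stated. A two-dimensional affine slice $G\cap A$ is of no use unless an affine map sends $G$ into it, which in general does not exist; what works is the image $L(G)$ under a linear surjection $L:\R^n\to\R^2$. Even then, pulling the failure of $C^{1,\alpha}$ back from $L(G)$ to $G$ is not automatic: one must control distances of preimages along the bad ray, which requires $\rec(L(G))=L(\rec(G))$ (obtained in the paper via Rockafellar's Theorem 9.1, Lemmas \ref{le_rock} and \ref{le_projection}) and the transfer result \cite[Lemma 5.11]{KZ} (Lemma \ref{le_kz2}). Moreover the projection may produce a strip $\R\times I$, whose recession cone is a full line; there is no ``thinness at infinity'' there and a separate construction (Lemma \ref{le_kz1}) is needed. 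These points are fixable, but together with the cutoff issue they mean the proposal as written does not prove the theorem.
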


So Theorem \ref{th_1} gives a positive answer to \cite[Question 6.10 (3), (4)]{KZ}. By \eqref{eq_condition}, Theorem \ref{th_1} is a special case of Theorem \ref{th_main} below which works with more general moduli (namely those with property \eqref{de_condition} below).

The strategy of the proof of Theorem \ref{th_main} is similar to that used in \cite{KZ} where we reduced (using the notion of recessive cones) the case of an open convex subset of $ \R^n $ to the case of an open convex subset of $ \R^2 $ of the special form $ G_{\eta}:=\lbrace(x,y)\in\R^2:x>0,\;\vert y\vert<\eta(x)\rbrace $ where $ \eta:(0,\infty)\to(0,\infty) $ is concave and satisfies $ \lim_{t\to\infty}\eta(t)/t=0 $. However, the construction of $ f:G_{\eta}\to\R $ in the proof of Proposition \ref{pr_main} is much more sophisticated than the corresponding construction used in \cite{KZ}. The present construction is based on the notion of modulus $ \omega_{\eta} $ (see Definition \ref{de_omegaeta} below and for the motivation Remark \ref{re_f} below).

It's worth mentioning that if $ \omega $ is a modulus, then functions that are both $ (C\omega) $-semiconvex and $ (C\omega) $-semiconcave for some $ C\geq 0 $ can be characterized via ``Taylor approximation of the first order" or via ``$ C^{1,\omega} $-smoothness on all lines". See Lemma \ref{le_lines} below for a precise formulation. Thus Theorem \ref{th_1} can be reformulated as follows:

\begin{corollary}\label{co_1}
Let $ G\subset\R^n $ ($ n\geq 2 $) be an unbounded open convex set that doesn't contain a translation of a cone with non-empty interior. Let $ \alpha\in (0,1) $ and set $ \omega(t):=t^\alpha $ for all $ t\geq 0 $. Then there exists $ f:G\to\R $ that is Fréchet differentiable,
\begin{itemize}
\item[(i)] for every $ a\in G $ and $ v\in\R^n $, $ \vert v\vert=1 $, the derivative of the function $ t\mapsto f(a+tv) $ is uniformly continuous on $ \lbrace t\in\R:a+tv\in G\rbrace $ with modulus $ \omega $,
\item[(ii)] $ \vert f(x+h)-f(x)-f'(x)[h]\vert\leq\vert h\vert^{1+\alpha} $ for all $ x,x+h\in G $ and
\item[(iii)] $ f\notin C^{1,\alpha}(G) $.
\end{itemize}
\end{corollary}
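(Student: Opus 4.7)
The plan is to derive Corollary~\ref{co_1} from Theorem~\ref{th_1} by passing through Lemma~\ref{le_lines}, which (as indicated in the paragraph preceding the corollary) characterizes functions that are simultaneously $(C\omega)$-semiconvex and $(C\omega)$-semiconcave for some $C\geq 0$ in terms of a first-order Taylor bound and of $C^{1,\omega}$-smoothness along all lines.

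First, I would apply Theorem~\ref{th_1} with the given $G$ and $\alpha$ to produce a function $f_0:G\to\R$ that is both $\omega$-semiconvex and $\omega$-semiconcave for $\omega(t)=t^{\alpha}$ and satisfies $f_0\notin C^{1,\alpha}(G)$. Applying Lemma~\ref{le_lines} to $f_0$ then yields constants $C_1,C_2>0$ such that $f_0$ is Fr\'echet differentiable on $G$, the directional derivative $t\mapsto\tfrac{d}{dt}f_0(a+tv)$ is uniformly continuous on $\{t\in\R:a+tv\in G\}$ with modulus $C_1\omega$ for every $a\in G$ and every $v\in\R^n$ with $|v|=1$, and
\[
|f_0(x+h)-f_0(x)-f_0'(x)[h]|\leq C_2|h|\omega(|h|)=C_2|h|^{1+\alpha}
\]
for all $x,x+h\in G$.

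To remove the constants, I set $\lambda:=\min(1/C_1,1/C_2)>0$ and $f:=\lambda f_0$. Fr\'echet differentiability is preserved, condition~(i) holds because the derivative of $t\mapsto f(a+tv)$ is $\lambda$ times that of $f_0$ and is therefore uniformly continuous with modulus $\lambda C_1\omega\leq\omega$, condition~(ii) follows from $\lambda C_2\leq 1$, and condition~(iii) is preserved because $f\in C^{1,\alpha}(G)$ would force $f_0=\lambda^{-1}f\in C^{1,\alpha}(G)$. The whole argument is merely a transfer of Theorem~\ref{th_1} through Lemma~\ref{le_lines} combined with a uniform rescaling; the only point of care is this bookkeeping of constants, and no substantive obstacle arises.
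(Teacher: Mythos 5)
Your proposal is correct and follows essentially the same route the paper intends: apply Theorem~\ref{th_1} to get $f_0$ both $\omega$-semiconvex and $\omega$-semiconcave with $f_0\notin C^{1,\alpha}(G)$, then transfer through Lemma~\ref{le_lines} ((i)$\Rightarrow$(ii),(iii)). The rescaling $f:=\lambda f_0$ with $\lambda=\min(1/C_1,1/C_2)$ is exactly the bookkeeping the paper leaves implicit to remove the constants, and it is handled correctly (all three properties, including (iii), survive the scaling).
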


Let us remark that Corollary \ref{co_1} improves \cite[Theorem 6.8]{KZ}. Note also that Theorem \ref{th_main} can be reformulated (using Lemma \ref{le_lines}) in the same way as Theorem \ref{th_1} and so we can get a more general statement than Corollary \ref{co_1}.

\section{Preliminaries}

Throughout this article, the norm on $ \R^n $ is always Euclidean. For $ A\subset\R^n $ the symbol $ \spann(A) $ denotes the linear span of $ A $. By a cone in $ \R^n $ we mean a set $ C\subset\R^n $ such that $ \lambda x\in C $ for each $ x\in C $ and $ \lambda>0 $. If $ G\subset\R^n $ is an open set, $ f:G\to\R $, $ x\in G $ and $ h\in\R^n $, then $ f'(x) $ denotes the Fréchet derivative of $ f $ at $ x $ and $ f'(x)[h] $ denotes its evaluation at $ h $.

\begin{definition}\label{de_modulus}
We denote by $ \M $ the set of all $ \omega:[0,\infty)\to[0,\infty) $ which are non-decreasing and satisfy $ \lim_{h\to0+}\omega(h)=0 $. The members of $ \M $ are called moduli.
\end{definition}

\begin{definition}
Let $ G\subset\R^n $ be an open set, $ \omega\in\M $ and $ \alpha\in (0,1] $. Then we denote by $ C^{1,\omega}(G) $ the set of all Fréchet differentiable $ f:G\to\R $ such that $ f' $ is uniformly continuous with modulus $ C\omega $ for some $ C>0 $. If $ \omega(t)=t^{\alpha} $ for every $ t\geq 0 $, then we also write $ C^{1,\alpha}(G) $ instead of $ C^{1,\omega}(G) $.
\end{definition}

\begin{definition}\label{de_semiconvex}
Let $ G\subset\R^n $ be an open convex set and $ \omega\in\M $.
\begin{itemize}
\item We say that $ f:G\to\R $ is semiconvex with modulus $ \omega $ (or $ \omega $-semiconvex for short) if
\begin{equation*}
f(\lambda x+(1-\lambda)y)\leq\lambda f(x)+(1-\lambda)f(y)+\lambda(1-\lambda)\vert x-y\vert\omega(\vert x-y\vert)
\end{equation*}
for every $ x,y\in G $ and $ \lambda\in[0,1] $.
\item We say that $ f:G\to\R $ is semiconcave with modulus $ \omega $ (or $ \omega $-semiconcave) if $ -f $ is semiconvex with modulus $ \omega $.
\end{itemize}
\end{definition}

\begin{remark}\label{re_continuous}
Let $ G\subset\R^n $ be an open convex set, $ f:G\to\R $ and $ \omega\in\M $. Then the following hold (see \cite[Theorem 2.1.7]{CS} and \cite[Theorem 3.3.7]{CS}):
\begin{itemize}
\item[(i)] If $ f $ is $ \omega $-semiconvex or $ \omega $-semiconcave, then $ f $ is continuous.
\item[(ii)] If $ f $ is both $ \omega $-semiconvex and $ \omega $-semiconcave, then $ f\in C^1(G) $.
\end{itemize}
\end{remark}

As already mentioned in the introduction, we have the following:

\begin{lemma}\label{le_lines}
Let $ G\subset\R^n $ be an open convex set, $ f:G\to\R $ and $ \omega\in\M $. Then the following are equivalent:
\begin{itemize}
\item[(i)] There exists $ C_1>0 $ such that $ f $ is both $ (C_1\omega) $-semiconvex and $ (C_1\omega) $-semiconcave.
\item[(ii)] There exists $ C_2>0 $ such that for every $ a\in G $ and $ v\in\R^n $, $ \vert v\vert=1 $, the function $ t\mapsto f(a+tv) $ is differentiable on $ \lbrace t\in\R:a+tv\in G\rbrace $ and its derivative is uniformly continuous with modulus $ C_2\omega $.
\item[(iii)] $ f $ is Fréchet differentiable and there exists $ C_3>0 $ such that
\begin{align*}
\vert f(x+h)-f(x)-f'(x)[h]\vert\leq C_3\vert h\vert\omega(\vert h\vert),\quad x,x+h\in G.
\end{align*}
\end{itemize}
\end{lemma}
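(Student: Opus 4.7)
The plan is to establish the equivalence via the cycle $(i) \Rightarrow (iii) \Rightarrow (ii) \Rightarrow (i)$, reducing each direction to a one-dimensional analysis along a line segment in $G$. The reason this works cleanly is that whenever $v \in \R^n$ satisfies $|v| = 1$, the function $g(t) := f(a+tv)$ inherits from $f$ both its semiconvex and semiconcave modulus without any rescaling, since $|(s-t)v| = |s-t|$.

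For $(i) \Rightarrow (iii)$ I would first invoke Remark \ref{re_continuous}(ii) to get that $f$ is Fréchet differentiable. Then, given $x, x+h \in G$ with $h \neq 0$, set $v = h/|h|$ and $g(t) = f(x+tv)$ on its open one-dimensional domain; this $g$ is $(C_1 \omega)$-semiconvex and $(C_1 \omega)$-semiconcave. A standard one-dimensional computation—letting $\lambda \to 0$ in the semiconvex inequality for $g$, combining it with its semiconcave counterpart, and using $g'(0) = f'(x)[v]$—yields the sandwiched estimate $|g(|h|) - g(0) - g'(0)|h|| \leq C_1 |h| \omega(|h|)$, which is precisely (iii) with $C_3 = C_1$.

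For $(iii) \Rightarrow (ii)$, fix any $a \in G$ and unit $v$; Fréchet differentiability of $f$ makes $g(t) = f(a+tv)$ differentiable with $g'(s) = f'(a+sv)[v]$. Applying (iii) twice—once at base point $a+sv$ with increment $(t-s)v$ and once at base point $a+tv$ with increment $(s-t)v$—and subtracting the two bounds gives $|(g'(t) - g'(s))(t-s)| \leq 2 C_3 |t-s| \omega(|t-s|)$, hence (ii) with $C_2 = 2 C_3$.

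For $(ii) \Rightarrow (i)$, fix distinct $x, y \in G$, let $L = |x-y|$, $v = (y-x)/L$, and $g(t) = f(x+tv)$. By the mean value theorem there is $\xi \in (0,L)$ with $g'(\xi) = (g(L) - g(0))/L$, so the chord deviation $\phi(t) := g(t) - g(0) - t(g(L)-g(0))/L$ satisfies $\phi'(t) = g'(t) - g'(\xi)$, which by (ii) obeys $|\phi'(t)| \leq C_2 \omega(L)$ on $[0,L]$. Since $\phi$ vanishes at both endpoints, integrating from whichever endpoint is closer gives $|\phi(\mu L)| \leq C_2 \min(\mu, 1-\mu) L \omega(L) \leq 2 C_2 \mu(1-\mu) L \omega(L)$, using the elementary inequality $\min(\mu, 1-\mu) \leq 2\mu(1-\mu)$ for $\mu \in [0,1]$; unwinding $\phi$ yields both the semiconvex and semiconcave inequalities with $C_1 = 2 C_2$. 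The most substantive step across the three directions is the one-dimensional sandwiching in $(i) \Rightarrow (iii)$, where both the semiconvex and semiconcave inequalities must be pushed to their $\lambda \to 0$ limits and combined; but this is a standard manipulation already present in \cite{CS}, so no genuine obstacle is expected.
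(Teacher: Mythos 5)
Your argument is correct, but it is a genuinely different route from the paper's: the paper disposes of the lemma in three lines by quoting external results, namely \cite[Proposition 6.4 (i), (ii), (iv)]{KZ} for (i)$\Rightarrow$(ii), (i)$\Rightarrow$(iii) and (iii)$\Rightarrow$(i), and \cite[Lemma 2.3 (i)]{K} together with \cite[Proposition 2.4 (ii)]{K} for (ii)$\Rightarrow$(i), whereas you give a self-contained proof of the cycle (i)$\Rightarrow$(iii)$\Rightarrow$(ii)$\Rightarrow$(i) by reducing everything to the restriction $g(t)=f(a+tv)$ on a line. Each step checks out: the $\lambda\to0$ sandwich in (i)$\Rightarrow$(iii) is legitimate because Remark \ref{re_continuous} (ii) (applied to the modulus $C_1\omega\in\M$) gives $f\in C^1(G)$, hence $g'(0)=f'(x)[v]$ exists; the two-sided application of the Taylor estimate in (iii)$\Rightarrow$(ii) gives $\vert g'(t)-g'(s)\vert\leq 2C_3\omega(\vert t-s\vert)$; and in (ii)$\Rightarrow$(i) the chord-deviation function $\phi$, the bound $\vert\phi'\vert\leq C_2\omega(L)$ via the mean value theorem, and the elementary inequality $\min\lbrace\mu,1-\mu\rbrace\leq 2\mu(1-\mu)$ correctly produce both semiconvexity and semiconcavity with $C_1=2C_2$ (the case $x=y$ being trivial). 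What your approach buys is independence from \cite{KZ} and \cite{K}, at the cost of essentially reproving the relevant parts of those references; the constants you obtain ($C_3=C_1$, $C_2=2C_3$, $C_1=2C_2$) are explicit and perfectly adequate, since the lemma only asserts existence of some constants.
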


\begin{proof}
It follows from Remark \ref{re_continuous} (i) and \cite[Proposition 6.4 (i), (iv)]{KZ} that (i) implies (ii) and (iii). Further, it follows easily from \cite[Lemma 2.3 (i)]{K} and \cite[Proposition 2.4 (ii)]{K} that (ii) implies (i). Finally, (iii) implies (i) by \cite[Proposition 6.4 (ii)]{KZ}.
\end{proof}

Recall that $ \omega\in\M $ is called sub-additive if $ \omega(x+h)\leq\omega(x)+\omega(h) $ for all $ x,h\geq 0 $.

\begin{remark}\label{re_additive}
Let $ \omega\in\M $. Then the following hold:
\begin{itemize}
\item[(i)] If $ \omega $ is concave, then it is sub-additive.
\item[(ii)] If $ \omega $ is sub-additive, then there exists a concave $ \varphi\in\M $ such that $ \omega\leq\varphi\leq 2\omega $.
\end{itemize}
Both of these statements are well-known. The proof of (i) is easy. Statement (ii) is due to Stechkin (see \cite[Lemma 2.3]{DZ} for references).
\end{remark}

\begin{remark}\label{re_concave}
Let $ f:(0,\infty)\to [0,\infty) $ be concave. Then the following hold:
\begin{itemize}
\item[(i)] $ f $ is non-decreasing.
\item[(ii)] The function $ t\mapsto f(t)/t $, $ t>0 $, is non-increasing.
\item[(iii)] For every $ t>0 $ and $ c\geq 1 $ we have $ f(ct)\leq cf(t) $.
\end{itemize}
All these facts are easy and well-known. Note that (iii) is just a reformulation of (ii). However, a useful one.
\end{remark}

As in \cite{KZ} we will use the following auxiliary notation:

\begin{definition}\label{de_gn}
Let $ n\in\N $ and $ \omega\in\M $. Then we denote by $ \mathcal{G}_n(\omega) $ the set of all open convex sets $ G\subset\R^n $ for which there exist $ a\in G $, $ x\in\R^n\setminus\lbrace 0\rbrace $ and $ f\in C^1(G) $ such that $ f $ is both $ \omega $-semiconvex and $ \omega $-semiconcave, $ M:=\lbrace a+\lambda x:\lambda\geq 0\rbrace\subset G $ and $ f' $ is uniformly continuous on $ M $ with modulus $ C\omega $ for no $ C>0 $.
\end{definition}

\begin{remark}\label{re_gn}
Let $ n\in\N $ and $ \omega\in\M $. Then:
\begin{itemize}
\item[(i)] By Remark \ref{re_continuous} (ii) we can equivalently write $ f:G\to\R $ instead of $ f\in C^1(G) $ in the definition of $ \mathcal{G}_n(\omega) $.
\item[(ii)] If $ G\in\mathcal{G}_n(\omega) $, then \eqref{eq_basic} clearly doesn't hold.
\item[(iii)] If $ b\in\R^n $ and $ G+b\in\mathcal{G}_n(\omega) $, then it is easy to check that $ G\in\mathcal{G}_n(\omega) $.
\end{itemize}
\end{remark}

For the proof of the following lemma see \cite[Example 5.1]{KZ}.

\begin{lemma}\label{le_kz1}
Let $ \omega\in\M $ satisfy
\begin{align*}
\liminf_{h\to 0+}\frac{\omega(h)}{h}>0,\;\lim_{h\to\infty}\frac{\omega(h)}{h}=0.
\end{align*}
Then $ \R\times (0,1)\in\mathcal{G}_2(\omega) $.
\end{lemma}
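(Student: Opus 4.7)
The plan is to exhibit the explicit function $ f(x,y):=\beta xy $ for a suitable constant $ \beta>0 $, and to take as the ray $ M:=[0,\infty)\times\lbrace 1/2\rbrace $. The whole question of $ \omega $-semiconvexity and $ \omega $-semiconcavity reduces to a single pointwise estimate via the direct identity
\[
 \lambda f(z_1)+(1-\lambda)f(z_2) - f(\lambda z_1+(1-\lambda)z_2) = \beta\lambda(1-\lambda)(x_1-x_2)(y_1-y_2),
\]
where $ z_i=(x_i,y_i)\in G $ and $ \lambda\in[0,1] $. Since the right-hand side can have either sign, matching Definition \ref{de_semiconvex} for both semiconvexity and semiconcavity with modulus $ \omega $ amounts exactly to the single requirement
\[
 \beta\,\vert x_1-x_2\vert\,\vert y_1-y_2\vert \;\leq\; \vert z_1-z_2\vert\,\omega(\vert z_1-z_2\vert),\qquad z_1,z_2\in G.
\]

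The key structural fact is that $ G $ is a \emph{strip}: $ y_1,y_2\in(0,1) $ forces $ \vert y_1-y_2\vert<1 $, so $ \vert x_1-x_2\vert\vert y_1-y_2\vert\leq\vert z_1-z_2\vert\min(\vert z_1-z_2\vert,1) $. To dominate this by $ K\vert z_1-z_2\vert\omega(\vert z_1-z_2\vert) $ for some $ K>0 $, I would split on $ r:=\vert z_1-z_2\vert $. The assumption $ \liminf_{h\to 0+}\omega(h)/h>0 $ yields $ r_0\in(0,1] $ and $ c>0 $ with $ \omega(r)\geq cr $ for $ r\in(0,r_0] $, so on that range $ r\min(r,1)=r^2\leq c^{-1}r\omega(r) $; on $ [r_0,\infty) $, monotonicity of $ \omega $ gives $ \omega(r)\geq\omega(r_0)>0 $ and hence $ r\min(r,1)\leq r\leq\omega(r_0)^{-1}r\omega(r) $. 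Taking $ K:=\max(c^{-1},\omega(r_0)^{-1}) $ and $ \beta:=1/K $ settles the displayed estimate, so $ f\in C^\infty(G) $ is both $ \omega $-semiconvex and $ \omega $-semiconcave.

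For the remaining condition of Definition \ref{de_gn}, set $ a:=(0,1/2)\in G $ and $ v:=(1,0) $, so that $ M=\lbrace a+\lambda v:\lambda\geq 0\rbrace\subset G $. Since $ f'(x,y)=\beta(y,x) $, the values along $ M $ are $ f'(t,1/2)=(\beta/2,\beta t) $, giving $ \vert f'(p_1)-f'(p_2)\vert=\beta\vert t_1-t_2\vert $ for $ p_i=(t_i,1/2)\in M $. If $ f' $ were uniformly continuous on $ M $ with modulus $ C\omega $ for some $ C>0 $, one would obtain $ \omega(r)/r\geq\beta/C $ for every $ r>0 $, contradicting $ \lim_{r\to\infty}\omega(r)/r=0 $. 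Hence $ G\in\mathcal{G}_2(\omega) $. The main obstacle is really only the case analysis for the pointwise estimate above, in which the two hypotheses on $ \omega $ enter in complementary ranges of $ r $; the failure of uniform continuity of $ f' $ on $ M $ is then automatic from the sublinear growth of $ \omega $ at infinity.
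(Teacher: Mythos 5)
Your proof is correct: the bilinear identity for $f(x,y)=\beta xy$, the choice of $\beta$ absorbing the two constants coming from $\liminf_{h\to0+}\omega(h)/h>0$ (small $r$) and from $\omega(r)\geq\omega(r_0)>0$ (large $r$), and the contradiction with $\lim_{h\to\infty}\omega(h)/h=0$ along the ray $M=[0,\infty)\times\lbrace 1/2\rbrace$ together verify every requirement of Definition \ref{de_gn}. The paper does not reprove this lemma but refers to \cite[Example 5.1]{KZ}, and your construction (a scaled product $xy$ on the strip, which is $\omega$-semiconvex and $\omega$-semiconcave because the $y$-increment is bounded, while $\partial f/\partial y$ grows linearly along a horizontal ray) is essentially that example's argument, here written out in full.
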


As in \cite{KZ}, we will work with recession cones of unbounded convex sets $  A\subset\R^n $ and use the notation $ \rec(A) $ (instead of $ 0^+A $ used in \cite{R}).

\begin{definition}
Let $ \emptyset\neq A\subset\R^n $ be a convex set. Then we set
\begin{align*}
\rec(A):=\lbrace x\in\R^n:(\forall a\in A)(\forall\lambda\geq 0)\;a+\lambda x\in A\rbrace.
\end{align*}
\end{definition}

Now we recall some basic properties of recession cones. For the proof see \cite[Lemma 2.10]{KZ}.

\begin{lemma}\label{le_rec}
Let $ n\in\N $ and let $ \emptyset\neq G\subset\R^n $ be an open convex set. Then the following hold:
\begin{itemize}
\item[(i)] $ \rec(G)=\rec(\overline{G}) $.
\item[(ii)] $ \rec(G) $ is a closed convex cone and $ 0\in\rec(G) $.
\item[(iii)] $ \rec(G)=\lbrace x\in\R^n:(\exists a\in A)(\forall\lambda\geq 0)\;a+\lambda x\in A\rbrace $.
\item[(iv)] If $ G\subset\R^n $ is unbounded, then $ 1\leq\dim(\spann(\rec(G))) $.
\item[(v)] $ \dim(\spann(\rec(G)))=n $ if and only if $ G $ contains a translation of a cone with non-empty interior.
\end{itemize}
\end{lemma}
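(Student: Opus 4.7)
The plan is to prove the five items in order, using (i) as the main lever so that the remaining parts can be reduced to limit arguments together with the standard description of recession cones of closed convex sets. For (i), I would exploit the well-known fact that for an open convex $G$, $p \in G$ and $q \in \overline{G}$ imply that the half-open segment $[p,q)$ is contained in $G$. The inclusion $\rec(G) \subset \rec(\overline{G})$ is immediate by taking sequential limits in $\overline{G}$. Conversely, if $x \in \rec(\overline{G})$, $a \in G$ and $\lambda \geq 0$, then $b := a + 2\lambda x \in \overline{G}$ and $a + \lambda x = \tfrac{1}{2}a + \tfrac{1}{2}b \in G$ by the above fact, so $x \in \rec(G)$.

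Item (ii) is routine: $0 \in \rec(G)$ and the cone/convexity properties follow by direct substitution into the definition, while closedness follows because any sequential limit of points in $\rec(G)$ automatically lies in $\rec(\overline{G}) = \rec(G)$. For (iii), the inclusion $\subset$ is trivial (choose any $a \in G$). For $\supset$, given $a_0 \in G$ with $a_0 + \lambda x \in G$ for all $\lambda \geq 0$, I would show $x \in \rec(\overline{G})$ and then invoke (i). For any $a \in \overline{G}$, $\lambda \geq 0$ and $s \in (0,1]$, the convex combination
\begin{equation*}
(1-s)a + s\bigl(a_0 + (\lambda/s) x\bigr) = (1-s)a + s a_0 + \lambda x
\end{equation*}
is a convex combination of a point in $\overline{G}$ and a point in $G$, hence lies in $\overline{G}$; letting $s \to 0+$ yields $a + \lambda x \in \overline{G}$.

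For (iv), I would fix $a \in G$, choose $a_k \in G$ with $|a_k - a| \to \infty$, form unit vectors $v_k := (a_k - a)/|a_k - a|$, and extract a convergent subsequence $v_k \to v$ with $|v| = 1$. For each $\lambda \geq 0$ and sufficiently large $k$, the point $a + \lambda v_k$ lies on the segment $[a, a_k] \subset G$, so $a + \lambda v \in \overline{G}$, giving a non-zero element of $\rec(\overline{G}) = \rec(G)$. For (v), if $b + C \subset G$ for a cone $C$ of non-empty interior, then for every $x \in C$ the point $a := b + x \in G$ satisfies $a + \lambda x = b + (1+\lambda)x \in b + C \subset G$ for all $\lambda \geq 0$, so $x \in \rec(G)$ by (iii); hence $C \subset \rec(G)$ and $\spann(\rec(G)) = \R^n$. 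Conversely, a convex cone spanning $\R^n$ has non-empty interior, since $n$ linearly independent vectors $v_1, \dots, v_n \in \rec(G)$ produce the open set $\{\sum \lambda_i v_i : \lambda_i > 0\} \subset \rec(G)$; and for any $a \in G$ one has $a + \rec(G) \subset G$ directly from the definition, exhibiting $G$ as containing a translation of the cone $\rec(G)$.

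I expect the main obstacle to be (iii): the definition of $\rec(G)$ demands the ray property at every starting point $a \in G$, while the hypothesis in (iii) supplies it at only one point. The scaling device $\lambda \mapsto \lambda/s$ followed by $s \to 0+$ is precisely what converts a single ray into the full recession cone property (after passing to $\overline{G}$ and back via (i)). Once (iii) is in hand, (iv) and (v) are standard compactness and cone arguments, and (i), (ii) are bookkeeping built on the half-open segment fact for open convex sets.
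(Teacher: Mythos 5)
Your argument is correct, but note that the paper does not prove this lemma at all: it simply refers to \cite[Lemma 2.10]{KZ}, which in turn rests on standard facts about recession cones from Rockafellar's book. What you have written is essentially a self-contained reconstruction of that standard material: the half-open-segment fact for open convex sets handles (i), and your scaling device $(1-s)a+s\bigl(a_0+(\lambda/s)x\bigr)\to a+\lambda x$ is exactly the classical argument (Rockafellar, Theorem 8.3) showing that a single ray suffices to detect a recession direction. The citation buys brevity and a clean interface to known theory; your route buys independence from the literature at the cost of a page of routine verification, and it is the natural choice if one wants the lemma to be readable without \cite{R} or \cite{KZ} at hand. Two small points to tighten. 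In (iv) you conclude $v\in\rec(\overline{G})$ from the single ray $a+\lambda v\in\overline{G}$, $\lambda\ge 0$; this is not literally item (iii), which concerns rays lying in $G$, but it follows by running your own scaling argument inside the closed convex set $\overline{G}$ (a convex combination of two points of $\overline{G}$ lies in $\overline{G}$, and closedness replaces the half-open-segment fact in the limit $s\to 0+$), so the gap is purely notational and worth one sentence. In (v), observe that the paper's cones are not assumed convex (only $\lambda x\in C$ for $x\in C$, $\lambda>0$); your forward implication never uses convexity of $C$, so it stands as written, and in the converse direction $\rec(G)$ is indeed a cone in the paper's sense with non-empty interior, with $a+\rec(G)\subset G$ giving the required translate.
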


We will also need the following special case of \cite[Lemma 5.11]{KZ}:

\begin{lemma}\label{le_kz2}
Let $ m\in\N $, $ m\geq 2 $, and $ \omega\in\M $. Let $ G\subset\R^m $ be an open convex set and $ L:\R^m\to\R^2 $ a linear surjection. Suppose that $ \omega $ is concave, $ L(G)\in\mathcal{G}_2(\omega) $ and $ \rec(L(G))\subset L(\rec(G)) $. Then $ G\in\mathcal{G}_m(\omega) $.
\end{lemma}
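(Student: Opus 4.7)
The plan is to pull back a witness for $L(G)\in\mathcal{G}_2(\omega)$ to a witness for $G\in\mathcal{G}_m(\omega)$ via the linear surjection $L$, using the recession cone hypothesis to lift a half-line from $L(G)$ to $G$ and using concavity of $\omega$ to absorb the resulting operator-norm constants. By Definition \ref{de_gn} and Remark \ref{re_continuous} (ii), the hypothesis $L(G)\in\mathcal{G}_2(\omega)$ supplies $\tilde{a}\in L(G)$, $\tilde{x}\in\R^2\setminus\{0\}$ and $g\in C^1(L(G))$ which is both $\omega$-semiconvex and $\omega$-semiconcave, such that $\tilde{M}:=\{\tilde{a}+\lambda\tilde{x}:\lambda\geq 0\}\subset L(G)$ and $g'$ is not uniformly continuous on $\tilde{M}$ with modulus $C\omega$ for any $C>0$. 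First I would pick $a\in G$ with $L(a)=\tilde{a}$. Since $\tilde{a}+\lambda\tilde{x}\in L(G)$ for all $\lambda\geq 0$, Lemma \ref{le_rec} (iii) yields $\tilde{x}\in\rec(L(G))\subset L(\rec(G))$, so one may choose $x\in\rec(G)$ with $L(x)=\tilde{x}$; in particular $x\neq 0$. Setting $M:=\{a+\lambda x:\lambda\geq 0\}$, we get $M\subset G$ from the definition of $\rec(G)$ and $L(M)=\tilde{M}$. Define $f:=\kappa\cdot g\circ L$ on $G$, with $\kappa:=(\max(1,\|L\|))^{-2}$.

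The next step is to verify that $f$ is both $\omega$-semiconvex and $\omega$-semiconcave. For $y_1,y_2\in G$ and $\lambda\in[0,1]$, the $\omega$-semiconvexity of $g$ applied at $L(y_1),L(y_2)$ produces the factor $|L(y_1)-L(y_2)|\,\omega(|L(y_1)-L(y_2)|)$, which by $|L(y_1)-L(y_2)|\leq\|L\|\,|y_1-y_2|$ and Remark \ref{re_concave} (iii) (which gives $\omega(ct)\leq c\,\omega(t)$ for $c\geq 1$) is bounded by $(\max(1,\|L\|))^2|y_1-y_2|\,\omega(|y_1-y_2|)=\kappa^{-1}|y_1-y_2|\,\omega(|y_1-y_2|)$. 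Multiplying by $\kappa$ yields the $\omega$-semiconvexity inequality for $f$, and the analogous argument applied to $-g$ gives $\omega$-semiconcavity of $f$; the membership $f\in C^1(G)$ is automatic.

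For the final step I argue by contradiction that $f'$ is not uniformly continuous on $M$ with modulus $C\omega$ for any $C>0$. Suppose such a $C$ exists and fix a linear right inverse $R:\R^2\to\R^m$ of $L$, which exists as $L$ is surjective. Differentiating yields $f'(y)=\kappa\cdot g'(L(y))\circ L$, so for $v\in\R^2$ one has $g'(L(y))[v]=\kappa^{-1}f'(y)[R(v)]$, hence
\begin{align*}
\|g'(L(y_1))-g'(L(y_2))\|\leq\kappa^{-1}\|R\|\,\|f'(y_1)-f'(y_2)\|.
\end{align*}
For $y_i=a+\lambda_i x\in M$ we have $|y_1-y_2|=(|x|/|\tilde{x}|)\,|L(y_1)-L(y_2)|$, and a final application of Remark \ref{re_concave} (iii) turns $\omega(|y_1-y_2|)$ into a constant multiple of $\omega(|L(y_1)-L(y_2)|)$. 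Chaining the bounds shows that $g'|_{\tilde{M}}$ is uniformly continuous with some modulus $C'\omega$, contradicting the choice of $g$. The main obstacle I expect is the lifting step of the first paragraph: it is precisely the hypothesis $\rec(L(G))\subset L(\rec(G))$ that guarantees $\tilde{x}$ can be lifted into $\rec(G)$ so that the entire half-line $M$ stays in $G$; the rest of the proof is scaling bookkeeping made legitimate by the concavity of $\omega$.
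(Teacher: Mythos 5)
Your proof is correct. Note that the paper does not prove this lemma at all: it is quoted as a special case of \cite[Lemma 5.11]{KZ}, so you have supplied a self-contained argument where the paper only gives a citation. Your argument is the natural pull-back proof and all the delicate points are handled properly: the hypothesis $\rec(L(G))\subset L(\rec(G))$ (together with Lemma \ref{le_rec} (iii) applied to the open convex set $L(G)$) is exactly what lets you lift $\tilde{x}$ to some $x\in\rec(G)\setminus\{0\}$ so that the whole ray $M$ lies in $G$ with $L(M)=\tilde{M}$; the rescaling $f=\kappa\, g\circ L$ with $\kappa=(\max(1,\Vert L\Vert))^{-2}$, justified by monotonicity and Remark \ref{re_concave} (iii), is needed because Definition \ref{de_gn} demands semiconvexity/semiconcavity with modulus $\omega$ itself rather than $C\omega$; and the right inverse $R$ of $L$ together with the identity $\vert y_1-y_2\vert=(\vert x\vert/\vert\tilde{x}\vert)\vert L(y_1)-L(y_2)\vert$ on $M$ correctly transfers a hypothetical bound $\Vert f'(y_1)-f'(y_2)\Vert\leq C\omega(\vert y_1-y_2\vert)$ on $M$ into a bound $\Vert g'(\tilde{y}_1)-g'(\tilde{y}_2)\Vert\leq C'\omega(\vert\tilde{y}_1-\tilde{y}_2\vert)$ on $\tilde{M}$, contradicting the choice of $g$. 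The only cosmetic remark is that concavity of $\omega$ is used exactly where the lemma's hypothesis provides it, and the degenerate cases $y_1=y_2$ should be (trivially) excluded before invoking Remark \ref{re_concave} (iii), which is stated for $t>0$.
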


The following lemma is an easy consequence of \cite[Theorem 9.1]{R}.

\begin{lemma}\label{le_rock}
Let $ m,n\in\N $, let $ G\subset\R^n $ be an unbounded open convex set and let $ L:\R^n\to\R^m $ be a linear surjection such that
\begin{align*}
\rec(G)\cap L^{-1}(\lbrace 0\rbrace)\subset\rec(G)\cap(-\rec(G)).
\end{align*}
Then $ \rec(L(G))=L(\rec(G)) $.
\end{lemma}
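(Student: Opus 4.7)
The plan is to reduce the statement to a direct application of Rockafellar's Theorem 9.1, which concerns the image of a convex set under a linear map. That theorem is formulated for closed convex sets whose recession directions lying in $ \ker L $ are all lineality directions, so the natural move is to apply it to $ \overline{G} $ rather than to $ G $ itself.

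First I would translate the hypothesis into Rockafellar's language. By Lemma \ref{le_rec}(i) we have $ \rec(\overline{G}) = \rec(G) $, and by definition the lineality space of $ \overline{G} $ equals $ \rec(\overline{G})\cap(-\rec(\overline{G})) $. Hence the assumed inclusion
$$\rec(G)\cap L^{-1}(\lbrace 0\rbrace)\subset\rec(G)\cap(-\rec(G))$$
says precisely that every recession direction of $ \overline{G} $ which $ L $ sends to $ 0 $ is a lineality direction of $ \overline{G} $. This is exactly the assumption of Rockafellar's Theorem 9.1. Applying that theorem (together with its standard corollary on the recession cone of the image) then yields both $ L(\overline{G}) = \overline{L(G)} $ and $ \rec(L(\overline{G})) = L(\rec(\overline{G})) $.

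To finish, I would use that $ L $, being a surjective linear map between finite-dimensional spaces, is an open map, so $ L(G) $ is a nonempty open convex subset of $ \R^m $. Hence Lemma \ref{le_rec}(i) also applies to $ L(G) $, giving
$$\rec(L(G)) = \rec(\overline{L(G)}) = \rec(L(\overline{G})) = L(\rec(\overline{G})) = L(\rec(G)),$$
which is the desired conclusion.

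The only real obstacle is bookkeeping: the lemma is phrased in terms of $ G $ and $ \rec(L(G)) $, whereas Rockafellar's theorem naturally works with $ \overline{G} $ and $ \rec(\overline{L(G)}) $. The two invocations of Lemma \ref{le_rec}(i), one on $ G $ and one on the open convex set $ L(G) $, are what bridge this gap and make the reduction entirely routine, which is presumably why the authors describe the lemma as an easy consequence of Theorem 9.1.
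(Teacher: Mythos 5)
Your proposal is correct and follows essentially the same route as the paper: apply Lemma \ref{le_rec}(i) to transfer the hypothesis to $\overline{G}$, invoke Rockafellar's Theorem 9.1 for $C:=\overline{G}$ to get closedness of $L(\overline{G})$ and $\rec(L(\overline{G}))=L(\rec(\overline{G}))$, deduce $L(\overline{G})=\overline{L(G)}$, and apply Lemma \ref{le_rec}(i) once more to the open convex set $L(G)$. The only cosmetic difference is that the closure identity $L(\overline{G})=\overline{L(G)}$ comes from the closedness statement in Theorem 9.1 plus continuity of $L$ (the "easy" step in the paper), not from a separate corollary, but this does not affect the argument.
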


\begin{proof}
By Lemma \ref{le_rec} (i) we have $ \rec(G)=\rec(\overline{G}) $. So $ \rec(\overline{G})\cap L^{-1}(\lbrace 0\rbrace)\subset\rec(\overline{G})\cap(-\rec(\overline{G})) $ and thus \cite[Theorem 9.1]{R} (applied to $ C:=\overline{G} $) implies that $ L(\overline{G}) $ is closed and $ \rec(L(\overline{G}))=L(\rec(\overline{G})) $. Hence we easily obtain that $ L(\overline{G})=\overline{L(G)} $. Therefore, $ \rec(L(\overline{G}))=\rec(L(G)) $ by Lemma \ref{le_rec} (i) and the equality $ \rec(L(G))=L(\rec(G)) $ follows.
\end{proof}

\section{Modulus $ \omega_{\eta} $}

One of the main ingredients of this article is the introduction of modulus $ \omega_{\eta} $:

\begin{definition}\label{de_omegaeta}
Let $ \eta:(0,\infty)\to(0,\infty) $ and $ \omega\in\M $. Then for every $ h>0 $ we set
\begin{align}\label{eq_modulus}
\omega_{\eta}(h):=\inf\bigg\lbrace\sum_{j=1}^{n}\max\bigg\lbrace 1,\frac{x_j-x_{j-1}}{\eta(x_j)}\bigg\rbrace\omega(x_j-x_{j-1})\bigg\rbrace,
\end{align}
where the infimum is taken over all partitions $ 0=x_0<x_1<\cdots<x_n=h $ of $ [0,h] $. Further we set $ \omega_{\eta}(0):=0 $.
\end{definition}

The fact that $ \omega_{\eta} $ is indeed a modulus will be proved in Lemma \ref{le_modulus} (i) below.

\begin{remark}\label{re_f}
The original motivation for $ \omega_{\eta} $ was that the following statement holds:
\begin{itemize}
\item Let $ \eta:(0,\infty)\to(0,\infty) $ be concave, let $ \omega\in\M $ and set
\begin{align*}
G:=\lbrace(x,y)\in\R^2:x>0,\;\vert y\vert<\eta(x)\rbrace.
\end{align*}
Let $ f:G\to\R $ be both $ \omega $-semiconvex and $ \omega $-semiconcave. Then $ f\in C^{1,\omega_{\eta}}(G) $.
\end{itemize}
However, we do not need this statement and therefore not prove it. Note also that $ \omega_{\eta} $ is ``minimal" in the sense of Proposition \ref{pr_main} below which we use substantially at the end of the article.
\end{remark}

\begin{remark}\label{re_trivial}
Let $ \eta:(0,\infty)\to(0,\infty) $ and $ \omega\in\M $. Then
\begin{align*}
\omega_{\eta}(h)\leq\max\bigg\lbrace 1,\frac{h}{\eta(h)}\bigg\rbrace\omega(h),\quad h>0.
\end{align*}
Indeed, taking $ n=1 $ in the sum in \eqref{eq_modulus}, we immediately obtain the estimate.
\end{remark}

\begin{lemma}\label{le_modulus}
Let $ \eta:(0,\infty)\to(0,\infty) $ be concave and let $ \omega\in\M $. Then the following hold:
\begin{itemize}
\item[(i)] $ \omega_{\eta}\in\M $.
\item[(ii)] For every $ x,h>0 $ we have $ \omega_{\eta}(x+h)\leq\omega_{\eta}(x)+\omega_{\eta}(h) $.
\end{itemize}
\end{lemma}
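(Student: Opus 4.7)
The plan is to extract from the concavity of $\eta$ two monotonicity facts and then manipulate the partitions in \eqref{eq_modulus} by concatenation (for (ii)) and truncation (for (i)). The two facts are: (a) $\eta$ is itself non-decreasing (Remark~\ref{re_concave}(i)), and (b) for every fixed $a\geq 0$, the function $t\mapsto t/\eta(a+t)$ is non-decreasing on $(0,\infty)$. I would obtain (b) by applying Remark~\ref{re_concave}(ii) to the concave positive function $t\mapsto \eta(a+t)$: that remark gives $\eta(a+t)/t$ non-increasing, and hence its reciprocal $t/\eta(a+t)$ non-decreasing.

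For (ii), fix $\varepsilon>0$ and take partitions $0=x_0<\cdots<x_n=x$ and $0=y_0<\cdots<y_m=h$ realizing the infima in \eqref{eq_modulus} to within $\varepsilon/2$. Their concatenation $0=x_0<\cdots<x_n=x<x+y_1<\cdots<x+y_m=x+h$ is a partition of $[0,x+h]$. The first $n$ summands are precisely those computing $\omega_{\eta}(x)$. In each of the remaining $m$ summands the denominator becomes $\eta(x+y_k)$, which is at least $\eta(y_k)$ by (a); thus each such summand is dominated by the corresponding summand for $\omega_{\eta}(h)$. Adding everything up gives $\omega_{\eta}(x+h)\leq \omega_{\eta}(x)+\omega_{\eta}(h)+\varepsilon$, and letting $\varepsilon\to 0+$ yields (ii).

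The part of (i) I expect to demand the most care is the monotonicity of $\omega_{\eta}$. Given $0<h_1<h_2$ and any partition $0=x_0<\cdots<x_n=h_2$, I would locate the unique $k$ with $x_{k-1}<h_1\leq x_k$ and pass to the truncated partition $0=x_0<\cdots<x_{k-1}<h_1$ of $[0,h_1]$. The summands with $j<k$ are unchanged, those with $j>k$ are non-negative and simply dropped, and the new $k$-th summand $\max\{1,(h_1-x_{k-1})/\eta(h_1)\}\omega(h_1-x_{k-1})$ is dominated by $\max\{1,(x_k-x_{k-1})/\eta(x_k)\}\omega(x_k-x_{k-1})$ by combining monotonicity of $\omega$ with fact (b) applied at $a=x_{k-1}$. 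Taking infima over all partitions of $[0,h_2]$ then yields $\omega_{\eta}(h_1)\leq \omega_{\eta}(h_2)$. This is precisely where fact (b) is indispensable, since otherwise shrinking the final step of a partition need not reduce the cost.

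For the limit $\lim_{h\to 0+}\omega_{\eta}(h)=0$, Remark~\ref{re_trivial} reduces the task to showing that $\max\{1,h/\eta(h)\}\omega(h)\to 0$ as $h\to 0+$. By Remark~\ref{re_concave}(ii), $\eta(t)/t$ is non-increasing, so $h/\eta(h)\leq 1/\eta(1)$ for $h\in(0,1]$; hence $\max\{1,h/\eta(h)\}$ is bounded near $0$ while $\omega(h)\to 0$, which closes the proof of (i).
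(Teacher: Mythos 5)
Your proof is correct and follows essentially the same route as the paper: subadditivity by concatenating near-optimal partitions (using that $\eta$ is non-decreasing), monotonicity by truncating a partition of the larger interval and dominating the last summand, and the limit at $0+$ via Remark~\ref{re_trivial} together with the bound $h/\eta(h)\leq 1/\eta(1)$ near $0$. The only cosmetic difference is that you obtain the key inequality $\frac{h_1-x_{k-1}}{\eta(h_1)}\leq\frac{x_k-x_{k-1}}{\eta(x_k)}$ by applying Remark~\ref{re_concave}(ii) to the shifted function $t\mapsto\eta(x_{k-1}+t)$, whereas the paper splits the fraction and uses monotonicity of $t/\eta(t)$ and of $\eta$; both derivations are valid.
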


\begin{proof}
(i): Obviously $ \omega_{\eta}(h)\geq 0 $ for every $ h\geq 0 $. By Remark \ref{re_trivial} and Remark \ref{re_concave} (ii), we have
\begin{align*}
\omega_{\eta}(h)\leq\max\bigg\lbrace 1,\frac{h}{\eta(h)}\bigg\rbrace\omega(h)\leq\max\bigg\lbrace 1,\frac{1}{\eta(1)}\bigg\rbrace\omega(h), \quad h\in(0,1),
\end{align*}
and thus $ \lim_{h\to 0+}\omega_{\eta}(h)=\omega_{\eta}(0)=0 $. To prove that $ \omega_{\eta} $ is non-decreasing, it is sufficient to show
\begin{align}\label{eq_monotone}
\omega_{\eta}(x)\leq\omega_{\eta}(x+h),\quad x,h>0.
\end{align}
Let $ x,h>0 $ and let $ 0=x_0<x_1<\cdots<x_n=x+h $ be a partition of $ [0,x+h] $. Then there exists $ m\in\N $ such that $ m\leq n $ and $ x_{m-1}<x\leq x_m $. According to Remark \ref{re_concave} (i), (ii), we have
\begin{align*}
\frac{x-x_{m-1}}{\eta(x)}=\frac{x}{\eta(x)}-\frac{x_{m-1}}{\eta(x)}\leq\frac{x_m}{\eta(x_m)}-\frac{x_{m-1}}{\eta(x_m)}=\frac{x_m-x_{m-1}}{\eta(x_m)}
\end{align*}
and thus
\begin{align*}
\omega_{\eta}(x)&\leq\max\bigg\lbrace 1,\frac{x-x_{m-1}}{\eta(x)}\bigg\rbrace\omega(x-x_{m-1})\\
&+\sum_{j=1}^{m-1}\max\bigg\lbrace 1,\frac{x_j-x_{j-1}}{\eta(x_j)}\bigg\rbrace\omega(x_j-x_{j-1})\\
&\leq\sum_{j=1}^{m}\max\bigg\lbrace 1,\frac{x_j-x_{j-1}}{\eta(x_j)}\bigg\rbrace\omega(x_j-x_{j-1})\\
&\leq\sum_{j=1}^{n}\max\bigg\lbrace 1,\frac{x_j-x_{j-1}}{\eta(x_j)}\bigg\rbrace\omega(x_j-x_{j-1}).
\end{align*}
Hence \eqref{eq_monotone} holds.

(ii): Let $ x,h>0 $ and $ \varepsilon>0 $. Then there exists a partition $ 0=x_0<x_1<\cdots<x_n=x $ of $ [0,x] $ such that
\begin{align*}
\sum_{j=1}^{n}\max\bigg\lbrace 1,\frac{x_j-x_{j-1}}{\eta(x_j)}\bigg\rbrace\omega(x_j-x_{j-1})\leq\omega_{\eta}(x)+\frac{\varepsilon}{2},
\end{align*}
and a partition $ 0=y_0<y_1<\cdots<y_m=h $ of $ [0,h] $ such that
\begin{align*}
\sum_{i=1}^{m}\max\bigg\lbrace 1,\frac{y_i-y_{i-1}}{\eta(y_i)}\bigg\rbrace\omega(y_i-y_{i-1})\leq\omega_{\eta}(h)+\frac{\varepsilon}{2}.
\end{align*}
Hence, for $ x_j:=x+y_{j-n} $ ($ j=n+1,\dots,n+m $) we obtain using Remark \ref{re_concave} (i)
\begin{align*}
&\omega_{\eta}(x+h)\leq\sum_{j=1}^{n+m}\max\bigg\lbrace 1,\frac{x_j-x_{j-1}}{\eta(x_j)}\bigg\rbrace\omega(x_j-x_{j-1})\\
&=\sum_{j=1}^{n}\max\bigg\lbrace 1,\frac{x_j-x_{j-1}}{\eta(x_j)}\bigg\rbrace\omega(x_j-x_{j-1})+\sum_{i=1}^{m}\max\bigg\lbrace 1,\frac{y_i-y_{i-1}}{\eta(x+y_i)}\bigg\rbrace\omega(y_i-y_{i-1})\\
&\leq\sum_{j=1}^{n}\max\bigg\lbrace 1,\frac{x_j-x_{j-1}}{\eta(x_j)}\bigg\rbrace\omega(x_j-x_{j-1})+\sum_{i=1}^{m}\max\bigg\lbrace 1,\frac{y_i-y_{i-1}}{\eta(y_i)}\bigg\rbrace\omega(y_i-y_{i-1})\\
&\leq\omega_{\eta}(x)+\omega_{\eta}(h)+\varepsilon.
\end{align*}
Thus (ii) holds.
\end{proof}

In the rest of this section we will prove several properties of $ \omega_{\eta} $ that we will need further.

\begin{lemma}\label{le_lowerestimate}
Let $ \eta:(0,\infty)\to(0,\infty) $ be non-decreasing and let $ \omega\in\M $ be concave. Then
\begin{align}\label{eq_lowerestimate}
\frac{h\omega(\eta(h))}{\eta(h)}\leq\omega_{\eta}(h),\quad h>0.
\end{align}
\end{lemma}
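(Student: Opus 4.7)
Since $\omega_{\eta}(h)$ is defined as an infimum over partitions, it suffices to fix an arbitrary partition $0=x_{0}<x_{1}<\cdots<x_{n}=h$ of $[0,h]$ and show the corresponding sum is at least $h\omega(\eta(h))/\eta(h)$. Writing $d_{j}:=x_{j}-x_{j-1}$, I would establish the stronger term-by-term estimate
$$\max\bigl\{1,d_{j}/\eta(x_{j})\bigr\}\omega(d_{j})\;\geq\;d_{j}\cdot\frac{\omega(\eta(h))}{\eta(h)}\qquad(j=1,\dots,n);$$
summing over $j$ telescopes the right-hand side into $h\omega(\eta(h))/\eta(h)$ because $\sum_{j}d_{j}=h$, and taking the infimum over partitions yields \eqref{eq_lowerestimate}.

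The per-term estimate I would handle by splitting according to which branch of the maximum is active. If $d_{j}\leq\eta(x_{j})$, the left-hand side equals $\omega(d_{j})$; monotonicity of $\eta$ gives $d_{j}\leq\eta(x_{j})\leq\eta(h)$, and Remark~\ref{re_concave}~(ii) applied to $\omega$ (the ratio $t\mapsto\omega(t)/t$ is non-increasing for a concave modulus) immediately gives $\omega(d_{j})/d_{j}\geq\omega(\eta(h))/\eta(h)$, which is exactly what is wanted. If instead $d_{j}>\eta(x_{j})$, the left-hand side equals $(d_{j}/\eta(x_{j}))\omega(d_{j})$; monotonicity of $\omega$ yields $\omega(d_{j})\geq\omega(\eta(x_{j}))$, so this expression is at least $d_{j}\cdot\omega(\eta(x_{j}))/\eta(x_{j})$, and a second application of Remark~\ref{re_concave}~(ii), this time at the points $\eta(x_{j})\leq\eta(h)$, upgrades the ratio to $\omega(\eta(h))/\eta(h)$ and finishes the claim.

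The proof is essentially bookkeeping, and the only substantive input is the non-increasing monotonicity of $\omega(t)/t$ from Remark~\ref{re_concave}~(ii), which is invoked twice: once to pass from $\omega(d_{j})/d_{j}$ to $\omega(\eta(h))/\eta(h)$ in the first case, and once to control $\omega(\eta(x_{j}))/\eta(x_{j})$ by $\omega(\eta(h))/\eta(h)$ via the monotonicity of $\eta$ in the second. I do not foresee a genuine obstacle; note in particular that the concavity of $\eta$ assumed elsewhere in the section is not needed here, only that $\eta$ is non-decreasing, which matches the hypothesis of the lemma.
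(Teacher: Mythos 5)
Your proposal is correct and follows essentially the same route as the paper: the same case split according to which branch of the maximum is active (equivalently the paper's index sets $J_1$, $J_2$), with the same two inputs, monotonicity of $\omega$ and the non-increasing ratio $t\mapsto\omega(t)/t$ from Remark~\ref{re_concave}~(ii), together with $\eta(x_j)\leq\eta(h)$. The only cosmetic difference is that you bound every partition sum term by term and then take the infimum, whereas the paper picks an $\varepsilon$-near-optimal partition first; this changes nothing of substance.
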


\begin{proof}
Let $ h>0 $ and $ \varepsilon>0 $. Then there exists a partition $ 0=x_0<x_1<\cdots<x_n=h $ of $ [0,h] $ such that
\begin{align*}
\sum_{j=1}^{n}\max\bigg\lbrace 1,\frac{x_j-x_{j-1}}{\eta(x_j)}\bigg\rbrace\omega(x_j-x_{j-1})\leq\omega_{\eta}(h)+\varepsilon.
\end{align*}
Set
\begin{align*}
J_1:=\lbrace j\in\lbrace 1,\dots,n\rbrace:\eta(x_j)\leq x_j-x_{j-1}\rbrace,\\
J_2:=\lbrace j\in\lbrace 1,\dots,n\rbrace:x_j-x_{j-1}<\eta(x_j)\rbrace.
\end{align*}
Using Remark \ref{re_concave} (ii) we obtain
\begin{align*}
(x_j-x_{j-1})\frac{\omega(\eta(h))}{\eta(h)}&\leq(x_j-x_{j-1})\frac{\omega(\eta(x_j))}{\eta(x_j)}\\
&\leq\frac{x_j-x_{j-1}}{\eta(x_j)}\omega(x_j-x_{j-1})\\
&\leq\max\bigg\lbrace 1,\frac{x_j-x_{j-1}}{\eta(x_j)}\bigg\rbrace\omega(x_j-x_{j-1}),\quad j\in J_1,
\end{align*}
and 
\begin{align*}
(x_j-x_{j-1})\frac{\omega(\eta(h))}{\eta(h)}&\leq(x_j-x_{j-1})\frac{\omega(x_j-x_{j-1})}{x_j-x_{j-1}}\\
&\leq\max\bigg\lbrace 1,\frac{x_j-x_{j-1}}{\eta(x_j)}\bigg\rbrace\omega(x_j-x_{j-1}),\quad j\in J_2.
\end{align*}
Consequently
\begin{align*}
\frac{h\omega(\eta(h))}{\eta(h)}&=\sum_{j\in J_1}(x_j-x_{j-1})\frac{\omega(\eta(h))}{\eta(h)}+\sum_{j\in J_2}(x_j-x_{j-1})\frac{\omega(\eta(h))}{\eta(h)}\\
&\leq\sum_{j\in J_1}\max\bigg\lbrace 1,\frac{x_j-x_{j-1}}{\eta(x_j)}\bigg\rbrace\omega(x_j-x_{j-1})\\
&+\sum_{j\in J_2}\max\bigg\lbrace 1,\frac{x_j-x_{j-1}}{\eta(x_j)}\bigg\rbrace\omega(x_j-x_{j-1})\\
&=\sum_{j=1}^{n}\max\bigg\lbrace 1,\frac{x_j-x_{j-1}}{\eta(x_j)}\bigg\rbrace\omega(x_j-x_{j-1})\leq\omega_{\eta}(h)+\varepsilon.
\end{align*}
Hence \eqref{eq_lowerestimate} holds.
\end{proof}

\begin{lemma}\label{le_est}
Let $ \eta:(0,\infty)\to(0,\infty) $ and $ \omega\in\M $. Then
\begin{align}\label{eq_est}
\omega_{\eta}(x+h)\leq\omega_{\eta}(x)+\max\bigg\lbrace 1,\frac{h}{\eta(x+h)}\bigg\rbrace\omega(h),\quad x,h>0.
\end{align}
\end{lemma}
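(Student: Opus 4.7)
The plan is to observe that Lemma \ref{le_est} is a purely combinatorial fact about the infimum in Definition \ref{de_omegaeta}: no concavity hypothesis on $\eta$ or $\omega$ is needed, only the definition. The idea is to take a near-optimal partition for $\omega_{\eta}(x)$ and extend it by a single point.

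More precisely, fix $x,h>0$ and $\varepsilon>0$. By the definition of $\omega_{\eta}(x)$ as an infimum, choose a partition $0=x_0<x_1<\cdots<x_n=x$ of $[0,x]$ such that
\begin{align*}
\sum_{j=1}^{n}\max\bigg\lbrace 1,\frac{x_j-x_{j-1}}{\eta(x_j)}\bigg\rbrace\omega(x_j-x_{j-1})\leq\omega_{\eta}(x)+\varepsilon.
\end{align*}
Then define the refined partition of $[0,x+h]$ by setting $x_{n+1}:=x+h$, so that $0=x_0<x_1<\cdots<x_n<x_{n+1}=x+h$. Plugging this into the defining infimum for $\omega_{\eta}(x+h)$ yields
\begin{align*}
\omega_{\eta}(x+h)&\leq\sum_{j=1}^{n+1}\max\bigg\lbrace 1,\frac{x_j-x_{j-1}}{\eta(x_j)}\bigg\rbrace\omega(x_j-x_{j-1})\\
&\leq\omega_{\eta}(x)+\varepsilon+\max\bigg\lbrace 1,\frac{h}{\eta(x+h)}\bigg\rbrace\omega(h),
\end{align*}
where the extra summand is exactly the contribution of the appended interval $[x_n,x_{n+1}]=[x,x+h]$ of length $h$ with right endpoint $x+h$. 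Letting $\varepsilon\to 0+$ gives the desired estimate \eqref{eq_est}.

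There is essentially no obstacle: the whole argument is a one-line use of the variational definition. The only thing to double-check is that nothing in the appended term requires concavity of $\eta$ or monotonicity arguments (compare with the monotonicity proof in Lemma \ref{le_modulus} (i), which did need Remark \ref{re_concave}); here we never need to rearrange or compare different partitions, so the estimate comes out cleanly.
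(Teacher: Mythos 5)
Your proof is correct and is essentially identical to the paper's own argument: both take a near-optimal partition for $\omega_{\eta}(x)$, append the single point $x_{n+1}:=x+h$, and let $\varepsilon\to 0+$. Your observation that no concavity or monotonicity of $\eta$ is needed here is also consistent with the paper, which states the lemma without such hypotheses.
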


\begin{proof}
Let $ x,h>0 $ and $ \varepsilon>0 $. Then there exists a partition $ 0=x_0<x_1<\cdots<x_n=x $ of $ [0,x] $ such that
\begin{align*}
\sum_{j=1}^{n}\max\bigg\lbrace 1,\frac{x_j-x_{j-1}}{\eta(x_j)}\bigg\rbrace\omega(x_j-x_{j-1})\leq\omega_{\eta}(x)+\varepsilon.
\end{align*}
Hence, for $ x_{n+1}:=x+h $ we have
\begin{align*}
\omega_{\eta}(x+h)&\leq\sum_{j=1}^{n+1}\max\bigg\lbrace 1,\frac{x_j-x_{j-1}}{\eta(x_j)}\bigg\rbrace\omega(x_j-x_{j-1})\\
&=\max\bigg\lbrace 1,\frac{h}{\eta(x+h)}\bigg\rbrace\omega(h)+\sum_{j=1}^{n}\max\bigg\lbrace 1,\frac{x_j-x_{j-1}}{\eta(x_j)}\bigg\rbrace\omega(x_j-x_{j-1})\\
&\leq\omega_{\eta}(x)+\max\bigg\lbrace 1,\frac{h}{\eta(x+h)}\bigg\rbrace\omega(h)+\varepsilon.
\end{align*}
Consequently \eqref{eq_est} holds.
\end{proof}

\begin{lemma}\label{le_est2}
Let $ \eta:(0,\infty)\to(0,\infty) $ be non-decreasing and let $ \omega\in\M $. Then 
\begin{align}\label{eq_est2}
\omega_{\eta}(x+h)\leq\omega_{\eta}(x)+2\frac{h\omega(\eta(x))}{\eta(x)},\quad x,h>0,h\geq\frac{\eta(x)}{2}.
\end{align}
\end{lemma}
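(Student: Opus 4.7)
The plan is to iterate Lemma~\ref{le_est} along an equispaced partition of $[x,x+h]$ chosen fine enough so that the $\max\{1,\cdot\}$ factor in each application collapses to $1$.

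Concretely, I would set $n:=\lceil h/\eta(x)\rceil$ and take $x_k := x+kh/n$ for $k=0,1,\dots,n$. Telescoping $n$ applications of Lemma~\ref{le_est} along $x_0<x_1<\cdots<x_n$ yields
$$\omega_\eta(x+h)\leq\omega_\eta(x)+\sum_{k=1}^n\max\left\{1,\frac{h/n}{\eta(x_k)}\right\}\omega(h/n).$$
The choice of $n$ ensures $h/n\leq\eta(x)$, and combined with the monotonicity of $\eta$ and $x_k\geq x$ this gives $h/n\leq\eta(x)\leq\eta(x_k)$, so every maximum equals $1$. Monotonicity of $\omega$ together with $h/n\leq\eta(x)$ then gives $\omega(h/n)\leq\omega(\eta(x))$, and hence
$$\omega_\eta(x+h)\leq\omega_\eta(x)+n\,\omega(\eta(x)).$$

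The last step is to verify $n\leq 2h/\eta(x)$, which is precisely where the hypothesis $h\geq\eta(x)/2$ enters, via a short case split: if $h\geq\eta(x)$, then $n\leq h/\eta(x)+1\leq 2h/\eta(x)$; if $\eta(x)/2\leq h<\eta(x)$, then $n=1$ while $2h/\eta(x)\geq 1$. Substituting produces \eqref{eq_est2}. The only mildly delicate point is this elementary case split for $n$; everything else follows directly from Lemma~\ref{le_est} and the stated monotonicity hypotheses.
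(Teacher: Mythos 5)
Your proof is correct and takes essentially the same approach as the paper: there, too, $[x,x+h]$ is split into $m=\lceil h/\eta(x)\rceil$ equal pieces of length at most $\eta(x)$, so the $\max$ factors collapse to $1$, and the hypothesis $h\geq\eta(x)/2$ is used exactly to get $m\leq 2h/\eta(x)$. The only cosmetic difference is that you telescope Lemma \ref{le_est}, whereas the paper appends the equispaced points to an $\varepsilon$-optimal partition of $[0,x]$ directly from Definition \ref{de_omegaeta}.
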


\begin{proof}
Let $ x,h>0 $, $ h\geq\eta(x)/2 $, and $ \varepsilon>0 $. Then there exists a partition $ 0=x_0<x_1<\cdots<x_n=x $ of $ [0,x] $ such that
\begin{align*}
\sum_{j=1}^{n}\max\bigg\lbrace 1,\frac{x_j-x_{j-1}}{\eta(x_j)}\bigg\rbrace\omega(x_j-x_{j-1})\leq\omega_{\eta}(x)+\varepsilon.
\end{align*}
Denote by $ m $ the upper integer part of $ h/\eta(x) $ and set $ x_j:=x+(j-n)h/m $ for each $ j=n+1,\dots,n+m $. Then
\begin{align*}
\frac{m}{2}\leq\frac{h}{\eta(x)}\leq m
\end{align*}
and $ h/m\leq\eta(x)\leq\eta(x_j) $ for each $ j=n+1,\dots,n+m $. Thus
\begin{align*}
\omega_{\eta}(x+h)&\leq\sum_{j=1}^{n+m}\max\bigg\lbrace 1,\frac{x_j-x_{j-1}}{\eta(x_j)}\bigg\rbrace\omega(x_j-x_{j-1})\\
&=\sum_{j=1}^{n}\max\bigg\lbrace 1,\frac{x_j-x_{j-1}}{\eta(x_j)}\bigg\rbrace\omega(x_j-x_{j-1})\\
&+\sum_{j=n+1}^{n+m}\max\bigg\lbrace 1,\frac{\frac{h}{m}}{\eta(x_j)}\bigg\rbrace\omega\bigg(\frac{h}{m}\bigg)\\
&\leq\omega_{\eta}(x)+\varepsilon+\sum_{j=1}^{m}\omega\bigg(\frac{h}{m}\bigg)\leq\omega_{\eta}(x)+\varepsilon+\sum_{j=1}^{m}\omega(\eta(x))\\
&=\omega_{\eta}(x)+m\omega(\eta(x))+\varepsilon\leq\omega_{\eta}(x)+2\frac{h\omega(\eta(x))}{\eta(x)}+\varepsilon.
\end{align*}
Hence \eqref{eq_est2} holds.
\end{proof}

\section{The case $ \R^2 $}

We begin with three lemmas that we need to prove important Proposition \ref{pr_main} below.

\begin{lemma}\label{le_g1}
Let $ \eta:(0,\infty)\to (0,\infty) $ and $ \omega\in\M $ be both concave, let $ g:(0,\infty)\to\R $ be differentiable and let $ C>0 $. Suppose that
\begin{align}
\vert g(x+h)-g(x)\vert&\leq C\max\bigg\lbrace 1,\frac{h}{\eta(x+h)}\bigg\rbrace\omega(h),\quad x,h>0,\label{eq_g1_1}\\
\eta(x)\vert g'(x)\vert&\leq C\omega(\eta(x)),\quad x>0,\label{eq_g1_2}\\
\eta(x)\vert g'(x)-g'(x+h)\vert&\leq C\omega(h),\quad x,h>0,\label{eq_g1_3}
\end{align}
and set
\begin{align*}
G&:=\lbrace(x,y)\in\R^2:x>0,\;\vert y\vert<\eta(x)\rbrace,\\
f(x,y)&:=g(x)y,\quad (x,y)\in G.
\end{align*}
Then the function $ f $ is both $ (5C\omega) $-semiconvex and $ (5C\omega) $-semiconcave.
\end{lemma}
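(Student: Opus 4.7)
My plan is to verify condition (ii) of Lemma~\ref{le_lines}: I will show that for every $a \in G$ and every unit vector $v = (v_1, v_2) \in \R^2$, the derivative
\begin{equation*}
\psi'(t) := \tfrac{d}{dt}f(a+tv) = g'(x(t))v_1 y(t) + g(x(t))v_2,\qquad (x(t), y(t)) := a + tv,
\end{equation*}
is uniformly continuous on $\{t \in \R : a + tv \in G\}$ with modulus $5C\omega$. Together with the implication (ii)$\Rightarrow$(i) of Lemma~\ref{le_lines}, this yields the claimed $(5C\omega)$-semiconvexity and $(5C\omega)$-semiconcavity of $f$.

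To estimate $|\psi'(t_2) - \psi'(t_1)|$ for $t_1, t_2$ in the domain, set $(x_i, y_i) := (x(t_i), y(t_i))$. Without loss of generality assume $x_1 \leq x_2$, and write $\Delta t := |t_2 - t_1|$, $\Delta x := x_2 - x_1 = |v_1|\Delta t$, and $|\Delta y| := |y_2 - y_1| = |v_2|\Delta t$. A direct rearrangement gives
\begin{equation*}
\psi'(t_2) - \psi'(t_1) = v_1 v_2 (t_2 - t_1) g'(x_2) + v_1 y_1 [g'(x_2) - g'(x_1)] + v_2 [g(x_2) - g(x_1)],
\end{equation*}
and I would bound each piece separately. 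The middle piece is controlled by combining \eqref{eq_g1_3} with $|y_1| < \eta(x_1)$, giving $\leq C\omega(\Delta t)$. The first and third pieces each require a case split on whether $|\Delta y|$ (respectively $\Delta x$) is at most $\eta(x_2)$ or exceeds it. In the small-step case, the nonincrease of $\omega(t)/t$ from Remark~\ref{re_concave}~(ii) absorbs the $|\Delta y|$ or $\Delta x$ factor into the $\omega$; in the large-step case, the geometric bound $|\Delta y| \leq |y_1| + |y_2| < 2\eta(x_2)$ absorbs the dangerous factor coming from the $\max$ in \eqref{eq_g1_1}--\eqref{eq_g1_2}, after which $\eta(x_2) \leq |\Delta y| \leq \Delta t$ gives $\omega(\eta(x_2)) \leq \omega(\Delta t)$. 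Each of these two pieces contributes $\leq 2C\omega(\Delta t)$, summing with the middle piece to exactly $5C\omega(\Delta t)$.

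The technical heart is the first piece, $|v_1 v_2 (t_2 - t_1)g'(x_2)|$: the bound $|g'(x_2)| \leq C\omega(\eta(x_2))/\eta(x_2)$ from \eqref{eq_g1_2} cannot be used in isolation because $\omega(\eta(x_2))/\eta(x_2)$ is unrelated to $\omega(\Delta t)$. Rewriting $|v_1 v_2||t_2 - t_1| = |v_1||\Delta y|$ converts the estimate into a product involving $|\Delta y|$ that is tamed by the dichotomy above: if $|\Delta y| \leq \eta(x_2)$ then the nonincrease of $\omega(s)/s$ gives $|\Delta y|\omega(\eta(x_2))/\eta(x_2) \leq \omega(|\Delta y|) \leq \omega(\Delta t)$, while if $|\Delta y| > \eta(x_2)$ then $|\Delta y| < 2\eta(x_2)$ reduces the estimate to $2\omega(\eta(x_2)) \leq 2\omega(\Delta t)$. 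The analogous dichotomy handles the third piece via \eqref{eq_g1_1}. The main obstacle in this approach is ensuring that the geometric constraint $|\Delta y| < 2\eta(x_2)$ coming from the shape of $G$ is correctly invoked to absorb these dangerous factors; once the right algebraic rearrangement of $\psi'(t_2) - \psi'(t_1)$ is in place, the remaining computation is mechanical.
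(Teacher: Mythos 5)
Your computation is, in substance, the paper's own proof: the paper verifies, via \cite[Lemma 2.3]{KZ}, the bound $\vert f'(x+h,y+k)[h,k]-f'(x,y)[h,k]\vert\leq 5C\vert(h,k)\vert\omega(\vert(h,k)\vert)$, and writing $(h,k)=(t_2-t_1)v$ with $v$ a unit vector and dividing by $\vert t_2-t_1\vert$ this is exactly your estimate $\vert\psi'(t_2)-\psi'(t_1)\vert\leq 5C\omega(\vert t_2-t_1\vert)$. The paper splits the increment into the same three pieces ($hg'(x+h)k$, $hy[g'(x+h)-g'(x)]$, $k[g(x+h)-g(x)]$) and bounds them by $2C$, $C$, $2C$ using the same ingredients: $\vert k\vert\leq 2\eta(x+h)$, the monotonicity of $t\mapsto\omega(t)/t$ from Remark \ref{re_concave} (ii), and \eqref{eq_g1_2}, \eqref{eq_g1_3}, \eqref{eq_g1_1}. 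Your case splits are correct (for the third piece the paper avoids the dichotomy by noting $\vert k\vert\max\lbrace 1,h/\eta(x+h)\rbrace\leq 2\max\lbrace\vert k\vert,h\rbrace$, which in your notation is $\vert v_2\vert\max\lbrace 1,\Delta x/\eta(x_2)\rbrace\leq 2$ since $\vert v_2\vert\Delta x\leq\vert\Delta y\vert<2\eta(x_2)$; your phrase about ``$\omega(\eta(x_2))\leq\omega(\Delta t)$'' really belongs to the first piece, but either way the bound $2C\omega(\Delta t)$ comes out).

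The one genuine defect is the transfer step. Lemma \ref{le_lines} (ii)$\Rightarrow$(i) asserts only the existence of \emph{some} constant $C_1$ such that $f$ is $(C_1\omega)$-semiconvex and $(C_1\omega)$-semiconcave; it gives no relation between $C_1$ and your $C_2=5C$. So your concluding sentence, that this ``yields the claimed $(5C\omega)$-semiconvexity and $(5C\omega)$-semiconcavity,'' does not follow from that lemma, whereas the statement being proved specifies the modulus $5C\omega$ exactly — and this constant is not cosmetic: Proposition \ref{pr_main} applies the lemma with $C=1/5$ precisely to obtain semiconvexity and semiconcavity with modulus $\omega$ itself, as required by Definition \ref{de_gn} and Theorem \ref{th_main}. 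The repair is simply to use the constant-preserving transfer the paper uses, namely \cite[Lemma 2.3]{KZ}: the directional estimate $\vert f'(x+h,y+k)[h,k]-f'(x,y)[h,k]\vert\leq K\vert(h,k)\vert\omega(\vert(h,k)\vert)$ for all admissible points (which is exactly what you proved, with $K=5C$) yields $K\omega$-semiconvexity and $K\omega$-semiconcavity with the same $K$. With that citation in place of Lemma \ref{le_lines}, your proof is complete and coincides with the paper's.
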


\begin{proof}
By \cite[Lemma 2.3]{KZ} it is sufficient to show that
\begin{multline*}
\vert f'(x+h,y+k)[h,k]-f'(x,y)[h,k]\vert\\\leq 5C\vert (h,k)\vert\omega(\vert (h,k)\vert),\quad (x,y),(x+h,y+k)\in G, h\geq 0.
\end{multline*}
Let $ (x,y),(x+h,y+k)\in G $, $ h\geq 0 $. Then, according to Remark \ref{re_concave} (i), we have
\begin{align}\label{eq_kest}
\vert k\vert\leq\vert y+k\vert+\vert y\vert\leq\eta(x+h)+\eta(x)\leq 2\eta(x+h)
\end{align}
and thus by \eqref{eq_g1_2} and Remark \ref{re_concave} (ii)
\begin{align*}
\vert k\vert\vert g'(x+h)\vert&=\vert k\vert\frac{\eta(x+h)\vert g'(x+h)\vert}{\eta(x+h)}\leq C\vert k\vert\frac{\omega(\eta(x+h))}{\eta(x+h)}\\
&\leq C\vert k\vert\frac{\omega\big(\frac{\vert k\vert}{2}\big)}{\frac{\vert k\vert}{2}}\leq 2C\omega(\vert k\vert).
\end{align*}
Further, by \eqref{eq_g1_3} we have
\begin{align*}
\vert y\vert\vert g'(x+h)-g'(x)\vert\leq\eta(x)\vert g'(x+h)-g'(x)\vert\leq C\omega(h).
\end{align*}
Consequently
\begin{multline}\label{eq_h}
\vert g'(x+h)(y+k)-g'(x)y\vert\leq\vert y\vert\vert g'(x+h)-g'(x)\vert+\vert k\vert\vert g'(x+h)\vert\\
\leq C\omega(h)+2C\omega(\vert k\vert)\leq 3C\omega(\vert (h,k)\vert).
\end{multline}
Since by \eqref{eq_kest} we have
\begin{align*}
\vert k\vert\max\bigg\lbrace 1,\frac{h}{\eta(x+h)}\bigg\rbrace=\max\bigg\lbrace\vert k\vert,\frac{\vert k\vert}{\eta(x+h)}h\bigg\rbrace\leq 2\max\lbrace\vert k\vert,h\rbrace,
\end{align*}
we obtain by \eqref{eq_g1_1}
\begin{multline}\label{eq_k}
\vert k\vert\vert g(x+h)-g(x)\vert\leq C\vert k\vert\max\bigg\lbrace 1,\frac{h}{\eta(x+h)}\bigg\rbrace\omega(h)\\
\leq 2C\max\lbrace\vert k\vert,h\rbrace\omega(h)\leq 2C\vert (h,k)\vert\omega(\vert (h,k)\vert).
\end{multline}
Hence \eqref{eq_h} and \eqref{eq_k} give
\begin{align*}
&\vert f'(x+h,y+k)[h,k]-f'(x,y)[h,k]\vert\\
&\leq\vert h\vert\bigg\vert\frac{\partial f}{\partial x}(x+h,y+k)-\frac{\partial f}{\partial x}(x,y)\bigg\vert+\vert k\vert\bigg\vert\frac{\partial f}{\partial y}(x+h,y+k)-\frac{\partial f}{\partial y}(x,y)\bigg\vert\\
&=\vert h\vert\vert g'(x+h)(y+k)-g'(x)y\vert+\vert k\vert\vert g(x+h)-g(x)\vert\\
&\leq 3C\vert (h,k)\vert\omega(\vert (h,k)\vert)+2C\vert (h,k)\vert\omega(\vert (h,k)\vert)=5C\vert (h,k)\vert\omega(\vert (h,k)\vert).
\end{align*}
\end{proof}

\begin{lemma}\label{le_envelope}
Let $ \eta:(0,\infty)\to(0,\infty) $ and $ \omega\in\M $ be both concave. Define the function $ \psi $ by
\begin{multline*}
\psi(x):=\sup\lbrace\lambda\omega_{\eta}(x_1)+(1-\lambda)\omega_{\eta}(x_2):x_1,x_2\geq 0,\lambda\in[0,1],\\
x=\lambda x_1+(1-\lambda)x_2\rbrace,\quad x\geq 0.
\end{multline*}
Then $ \psi\in\M $, $ \psi $ is concave, $ \omega_{\eta}\leq\psi\leq 2\omega_{\eta} $ and
\begin{align}\label{eq_envelope}
\psi(t+\eta(t))\leq\psi(t)+2\omega(\eta(t)),\quad t>0,\eta(t)\leq t.
\end{align}
\end{lemma}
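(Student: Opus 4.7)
My plan is to check the four claims of the lemma in turn, with the first three following from prior results and the inequality \eqref{eq_envelope} being the substantive step.

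For the sandwich $\omega_\eta \leq \psi \leq 2\omega_\eta$, the lower bound is immediate from the admissible choice $x_1 = x_2 = x$. For the upper bound, Lemma \ref{le_modulus}(ii) gives sub-additivity of $\omega_\eta$, so Remark \ref{re_additive}(ii) (Stechkin) provides a concave modulus $\varphi$ with $\omega_\eta \leq \varphi \leq 2\omega_\eta$; concavity of $\varphi$ yields $\lambda\omega_\eta(x_1) + (1-\lambda)\omega_\eta(x_2) \leq \varphi(x) \leq 2\omega_\eta(x)$ for every admissible decomposition, and passing to the sup gives $\psi \leq 2\omega_\eta$. From this sandwich, $\psi \in \M$ follows: $\psi(h) \to 0$ as $h \to 0^+$, and monotonicity comes from translating any decomposition of $x$ by $x' - x \geq 0$ and using that $\omega_\eta$ is non-decreasing. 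The same argument in fact shows that every concave majorant of $\omega_\eta$ dominates $\psi$; to verify that $\psi$ itself is concave, I combine two $\varepsilon$-nearly optimal decompositions of $z_1, z_2$ into a four-point convex combination of $z = \mu z_1 + (1-\mu) z_2$, and then apply a Caratheodory-type reduction in $\R^2$ to the convex hull of the four data points $(x_i, \omega_\eta(x_i))$ to obtain a two-point decomposition of $z$ whose weighted sum is at least the four-point sum; this decomposition is admissible in the supremum, giving $\psi(z) \geq \mu\psi(z_1) + (1-\mu)\psi(z_2) - \varepsilon$.

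The substantive step is \eqref{eq_envelope}. The strategy is to exhibit a concave majorant $\xi$ of $\omega_\eta$ on $[0, \infty)$ with $\xi(t + \eta(t)) \leq \psi(t) + 2\omega(\eta(t))$; since $\psi$ is dominated by every concave majorant of $\omega_\eta$, this forces $\psi(t + \eta(t)) \leq \xi(t + \eta(t))$. I build $\xi$ by setting $\xi = \psi$ on $[0, t]$ and extending concavely on $[t, \infty)$ using the two local upper estimates: Lemma \ref{le_est} (with the maximum equal to $1$, since $\eta(s) \geq \eta(t) > s - t$ in the relevant range) gives $\omega_\eta(s) \leq \omega_\eta(t) + \omega(s - t)$ on $[t, t + \eta(t)/2]$, while Lemma \ref{le_est2} gives $\omega_\eta(s) \leq \omega_\eta(t) + 2(s - t)\omega(\eta(t))/\eta(t)$ for $s \geq t + \eta(t)/2$; the latter, at $s = t + \eta(t)$, is exactly the bound $\omega_\eta(t) + 2\omega(\eta(t)) \leq \psi(t) + 2\omega(\eta(t))$.

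The main obstacle is preserving concavity of $\xi$ at the join $s = t$: the naive candidate $\xi(s) = \psi(t) + 2\omega(s - t)$ on $[t, t + \eta(t)]$ has right derivative $2\omega'(0^+)$ at $t$ that may be infinite (for example when $\omega(u) = u^\alpha$ with $\alpha \in (0,1)$) and hence exceed $\psi'_-(t)$. To circumvent this I anticipate using a piecewise affine (or piecewise concave) extension profile whose slope at $s = t$ is bounded above by $\psi'_-(t)$, exploiting the chord-versus-derivative inequality $\omega'(\eta(t)) \leq \omega(\eta(t))/\eta(t)$ from Remark \ref{re_concave}(ii) applied to $\omega$ together with the lower estimate of Lemma \ref{le_lowerestimate} $\omega_\eta(t) \geq t\omega(\eta(t))/\eta(t)$ to control $\psi(t)$. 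The factor $2$ in the conclusion is then inherited directly from the coefficient $2$ in Lemma \ref{le_est2}, and the technical gluing at the join is where I expect the bulk of the work to lie.
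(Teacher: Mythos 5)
Your treatment of the first three claims is fine and essentially matches the paper (which simply observes that $\psi$ is the upper concave envelope of $\omega_{\eta}$, citing Rockafellar, and sandwiches it with the Stechkin majorant $\varphi$; your Carath\'eodory-type two-point reduction is the same fact). The problem is \eqref{eq_envelope}, which is the whole substance of the lemma: what you actually verify is only the inequality for $\omega_{\eta}$ itself, namely $\omega_{\eta}(t+\eta(t))\leq\omega_{\eta}(t)+2\omega(\eta(t))$ (an immediate consequence of Lemma \ref{le_est2}), and you propose to transfer it to $\psi$ by exhibiting a concave majorant $\xi$ of $\omega_{\eta}$ with $\xi(t+\eta(t))\leq\psi(t)+2\omega(\eta(t))$. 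That strategy is legitimate in principle (since $\psi$ is the least concave majorant, such a $\xi$ exists if and only if \eqref{eq_envelope} holds), but you never construct $\xi$: you yourself note that the profile $\psi(t)+2\omega(s-t)$ destroys concavity at the join, and the remaining text ("piecewise affine profile with slope at most $\psi'_-(t)$\,\dots\, where I expect the bulk of the work to lie") is a plan, not a proof. Moreover the difficulty is not mere bookkeeping. If $\psi'_-(t)\eta(t)\leq 2\omega(\eta(t))$ the statement is trivial (use the tangent line), so the essential case is $\psi'_-(t)\eta(t)>2\omega(\eta(t))$; there, any concave $\xi$ with $\xi(t)=\psi(t)$ and $\xi(t+\eta(t))\leq\psi(t)+2\omega(\eta(t))$ has average slope at most $2\omega(\eta(t))/\eta(t)$ on $[t,t+\eta(t)]$, and it must nevertheless stay above $\omega_{\eta}$ on that interval and above $\omega_{\eta}$ on $[t+\eta(t),\infty)$ with ever smaller slopes; your two local estimates (Lemma \ref{le_est} with maximum $1$, Lemma \ref{le_est2}) bound $\omega_{\eta}$ from above but do not show that such a profile can be threaded between them, and at a contact point ($\psi(t)=\omega_{\eta}(t)$) the bound $\omega_{\eta}(t+u)\leq\omega_{\eta}(t)+\omega(u)$ gives no room when $\omega(u)/u\to\infty$. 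Closing this would require an argument of essentially the same complexity as the lemma itself.

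For comparison, the paper proves \eqref{eq_envelope} directly from the definition of $\psi$, with no gluing: take an arbitrary admissible decomposition $t+\eta(t)=\lambda x_1+(1-\lambda)x_2$ with $x_1\leq x_2$. If $t\leq x_1$, shift both points left by $\eta(t)$ and apply Lemma \ref{le_est} (the maximum is $1$) to each, losing only $\omega(\eta(t))$; if $x_1\leq t$, keep $x_1$ and shift only $x_2$ left by $h:=\eta(t)/(1-\lambda)$, applying Lemma \ref{le_est} or Lemma \ref{le_est2} according to whether $h\leq\eta(x_2)$ or not, so that the loss $\omega(h)$ or $2h\,\omega(\eta(x))/\eta(x)$, after multiplication by the weight $1-\lambda$, is at most $2\omega(\eta(t))$ (here Remark \ref{re_concave} (ii), (iii) and $t\leq x_2-h$ are used). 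The shifted decomposition represents $t$, so concavity of $\psi$ closes the estimate. The key idea your sketch is missing is precisely this weight-adapted shift of the decomposition, which avoids ever having to build a global concave witness.
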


\begin{proof}
By Remark \ref{re_additive} (ii) and Lemma \ref{le_modulus}, there exists a concave $ \varphi\in\M $ such that $ 0\leq\omega_{\eta}\leq\varphi\leq 2\omega_{\eta} $. Note that $ \psi $ is the upper concave envelope of $ \omega_{\eta} $ (see \cite[Corollary 17.1.5]{R}). In particular, $ \psi $ is concave and $ 0\leq\omega_{\eta}\leq\psi\leq\varphi $. Thus $ \psi\leq 2\omega_{\eta} $, $ \lim_{x\to 0+}\psi(x)=0 $ and it follows from Remark \ref{re_concave} (i) that $ \psi $ is non-decreasing. Therefore, $ \psi\in\M $.

Let $ t>0 $, $ x_1,x_2\geq 0 $ and $ \lambda\in[0,1] $ be such that $ \eta(t)\leq t $, $ x_1\leq x_2 $ and $ \lambda x_1+(1-\lambda)x_2=t+\eta(t) $. Then to prove \eqref{eq_envelope} it is sufficient to show
\begin{align*}
\lambda\omega_{\eta}(x_1)+(1-\lambda)\omega_{\eta}(x_2)\leq\psi(t)+2\omega(\eta(t)).
\end{align*}
Note also that $ \eta $ is non-decreasing by Remark \ref{re_concave} (i).

If $ t\leq x_1 $, then $ \eta(t)\leq t\leq x_1\leq x_2 $ and thus by Lemma \ref{le_est} (with $ h=\eta(t) $ and $ x=x_1-\eta(t) $ or  $ x=x_2-\eta(t) $)
\begin{align*}
\omega_{\eta}(x_i)&\leq\omega_{\eta}(x_i-\eta(t))+\max\bigg\lbrace 1,\frac{\eta(t)}{\eta(x_i)}\bigg\rbrace\omega(\eta(t))\\
&=\omega_{\eta}(x_i-\eta(t))+\omega(\eta(t)),\quad i=1,2.
\end{align*}
Hence
\begin{align*}
\lambda\omega_{\eta}(x_1)+(1-\lambda)\omega_{\eta}(x_2)&\leq\lambda\omega_{\eta}(x_1-\eta(t))+(1-\lambda)\omega_{\eta}(x_2-\eta(t))+\omega(\eta(t))\\
&\leq\lambda\psi(x_1-\eta(t))+(1-\lambda)\psi(x_2-\eta(t))+\omega(\eta(t))\\
&\leq\psi(\lambda(x_1-\eta(t))+(1-\lambda)(x_2-\eta(t)))+\omega(\eta(t))\\
&=\psi(t)+\omega(\eta(t))\leq\psi(t)+2\omega(\eta(t)).
\end{align*}

Now suppose that $ x_1\leq t $. Then $ \lambda<1 $ and therefore we may set
\begin{align*}
h:=\frac{\eta(t)}{1-\lambda},\;x:=x_2-\frac{\eta(t)}{1-\lambda}.
\end{align*}
Then $ (1-\lambda)t=t-\lambda t\leq t-\lambda x_1=(1-\lambda)x_2-\eta(t)=(1-\lambda)x $ and thus
\begin{align}\label{eq_space}
t\leq x.
\end{align}
In particular, $ 0<x $. If $ h\leq\eta(x+h) $, then by Lemma \ref{le_est} and Remark \ref{re_concave} (iii)
\begin{align*}
\omega_{\eta}(x_2)&=\omega_{\eta}(x+h)\leq\omega_{\eta}(x)+\max\bigg\lbrace 1,\frac{h}{\eta(x+h)}\bigg\rbrace\omega(h)\\
&=\omega_{\eta}(x)+\omega\bigg(\frac{\eta(t)}{1-\lambda}\bigg)\leq\omega_{\eta}(x)+\frac{\omega(\eta(t))}{1-\lambda}
\end{align*}
and thus
\begin{align*}
\lambda\omega_{\eta}(x_1)+(1-\lambda)\omega_{\eta}(x_2)&\leq\lambda\omega_{\eta}(x_1)+(1-\lambda)\omega_{\eta}(x)+\omega(\eta(t))\\
&\leq\lambda\psi(x_1)+(1-\lambda)\psi(x)+\omega(\eta(t))\\
&\leq\psi(\lambda x_1+(1-\lambda)x)+\omega(\eta(t))\\
&=\psi(t)+\omega(\eta(t))\leq\psi(t)+2\omega(\eta(t)).
\end{align*}
If $ \eta(x+h)\leq h $, then by Lemma \ref{le_est2}, Remark \ref{re_concave} (ii) and \eqref{eq_space} we have
\begin{align*}
\omega_{\eta}(x_2)&=\omega_{\eta}(x+h)\leq\omega_{\eta}(x)+2h\frac{\omega(\eta(x))}{\eta(x)}\\
&\leq\omega_{\eta}(x)+2h\frac{\omega(\eta(t))}{\eta(t)}=\omega_{\eta}(x)+2\frac{\omega(\eta(t))}{1-\lambda}
\end{align*}
and therefore
\begin{align*}
\lambda\omega_{\eta}(x_1)+(1-\lambda)\omega_{\eta}(x_2)&\leq\lambda\omega_{\eta}(x_1)+(1-\lambda)\omega_{\eta}(x)+2\omega(\eta(t))\\
&\leq\lambda\psi(x_1)+(1-\lambda)\psi(x)+2\omega(\eta(t))\\
&\leq\psi(\lambda x_1+(1-\lambda)x)+2\omega(\eta(t))\\
&=\psi(t)+2\omega(\eta(t)).
\end{align*}
\end{proof}

\begin{lemma}\label{le_g2}
Let $ \eta:(0,\infty)\to (0,\infty) $ and $ \psi\in\M $ be both concave, let $ \omega\in\M $ and let $ C>0 $. Suppose that
\begin{align}
\psi(t+\eta(t))-\psi(t)&\leq C\omega(\eta(t)),\quad t>0,\;\eta(t)\leq t,\label{eq_g2_1}\\
\psi(h)&\leq C\max\bigg\lbrace 1,\frac{h}{\eta(h)}\bigg\rbrace\omega(h),\quad h>0.\label{eq_g2_2}
\end{align}
Then
\begin{align*}
\psi(x+h)-\psi(x)&\leq 2C\max\bigg\lbrace 1,\frac{h}{\eta(x+h)}\bigg\rbrace\omega(h),\quad x,h>0.
\end{align*}
\end{lemma}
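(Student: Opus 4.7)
The plan is to split based on $ \max\{1,h/\eta(x+h)\} $: the ``small'' case $ h\leq\eta(x+h) $ (target $ \psi(x+h)-\psi(x)\leq 2C\omega(h) $) and the ``large'' case $ h>\eta(x+h) $ (target $ \psi(x+h)-\psi(x)\leq 2Ch\omega(h)/\eta(x+h) $). I will use throughout that $ \psi\in\M $ concave forces $ \psi(0)=0 $, so $ \psi $ is sub-additive and has non-increasing secant slopes; in particular $ \psi(x+h)-\psi(x)\leq\psi(h) $.

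In the small case, if additionally $ h\leq\eta(h) $ then \eqref{eq_g2_2} gives $ \psi(h)\leq C\omega(h) $ and sub-additivity closes it. Otherwise $ \eta(h)<h\leq\eta(x+h) $, and continuity of the concave $ \eta $ yields $ t^*\in[h,x+h] $ with $ \eta(t^*)=h $. Since $ \eta(t^*)\leq t^* $, assumption \eqref{eq_g2_1} gives $ \psi(t^*+h)-\psi(t^*)\leq C\omega(h) $. If $ t^*\leq x $, the monotonicity of secant slopes of $ \psi $ finishes; if $ t^*>x $, I split the interval at $ t^* $, handle $ [t^*,x+h] $ by the same monotonicity, and handle $ [x,t^*] $ by applying \eqref{eq_g2_2} to $ t^*-x\leq h $---here the concavity estimate $ \eta(t^*-x)/(t^*-x)\geq\eta(t^*)/t^*=h/t^* $ keeps the $ \max $-factor under control.

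In the large case I first dispose of $ x\leq h $ via sub-additivity: then $ x+h\leq 2h $, concavity yields $ \eta(h)\geq\eta(x+h)/2 $, and \eqref{eq_g2_2} applied to $ h $ already delivers the target. For $ x>h $, I iterate $ t_0:=x $, $ t_{k+1}:=t_k+\eta(t_k) $ up to the smallest $ N $ with $ t_N\geq x+h $. The hypothesis $ h>\eta(x+h) $ together with the ratio-monotonicity of $ \eta(t)/t $ ensures $ \eta(t_k)\leq t_k $ throughout the iteration, so \eqref{eq_g2_1} telescopes to $ \psi(x+h)-\psi(x)\leq NC\omega(h) $. Concavity of $ \eta $ gives $ \eta(t_k)\geq(t_k/(x+h))\eta(x+h)\geq\eta(x+h)/2 $ for $ k\leq N-1 $ (using $ t_k\geq x>(x+h)/2 $), which bounds $ N\leq 2h/\eta(x+h) $.

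The main obstacle is pinning the constant at exactly $ 2C $ rather than a larger multiple. In the small case with $ t^*>x $, the crude estimate $ \psi(t^*)-\psi(x)\leq\psi(h) $ fed into \eqref{eq_g2_2} is too weak when $ h>\eta(h) $, and only the concavity comparison $ \eta(t^*-x)\geq(t^*-x)h/t^* $ recovers a bound of the form $ C\omega(h) $. In the large case, the delicate off-by-one in counting $ N $ is precisely what forces the preliminary sub-additive reduction of $ x\leq h $, so that the remaining iteration sits inside the favourable zone $ \eta(t_k)\geq\eta(x+h)/2 $.
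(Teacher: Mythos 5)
Your ``small case'' with $t^*>x$ does not close, and this is a genuine gap rather than a loose constant. There you estimate $\psi(t^*)-\psi(x)\le\psi(t^*-x)\le C\max\lbrace 1,(t^*-x)/\eta(t^*-x)\rbrace\,\omega(t^*-x)$ and claim the concavity bound $(t^*-x)/\eta(t^*-x)\le t^*/h$ keeps the max-factor under control; but $t^*/h$ (and likewise $h/\eta(h)$) is unbounded, so nothing is controlled. Concretely take $\eta(t)=\sqrt t$, $h\ge 2$, $t^*=h^2$ and $x=t^*-h$: then $h=\eta(x+h)$ (your small case, target $2C\omega(h)$), $t^*>x$, $t^*-x=h$, your first piece is $0$ and your second piece is $C\max\lbrace 1,h/\eta(h)\rbrace\,\omega(h)=C\sqrt h\,\omega(h)$, exceeding the target by an unbounded factor. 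The defect is structural: sub-additivity $\psi(t^*)-\psi(x)\le\psi(t^*-x)$ translates the increment to the origin, exactly where \eqref{eq_g2_2} is weakest (the factor $s/\eta(s)$ blows up for the relevant $s$), and throws away that the increment sits far out at $x$, where $\psi$ is flat. The paper's proof supplies the missing idea for this regime: one may assume $\eta(x)\le h\le x$ (if $x\le h$, sub-additivity plus \eqref{eq_g2_2} suffices because then $\eta(x+h)\le 2\eta(h)$), and then concavity of $\psi$ is used as a secant-slope comparison together with \eqref{eq_g2_1} applied at the point $t=x$ itself:
\begin{align*}
\psi(x+h)-\psi(x)\le h\,\frac{\psi(x+\eta(x))-\psi(x)}{\eta(x)}\le C\,\frac{h}{\eta(x)}\,\omega(\eta(x))\le 2C\,\frac{h}{\eta(x+h)}\,\omega(h),
\end{align*}
using $\eta(x+h)\le\eta(2x)\le 2\eta(x)$ and $\omega(\eta(x))\le\omega(h)$. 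Your splitting at $t^*$ cannot reproduce this single slope comparison.

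Your ``large case'' with $x>h$ also fails to reach the stated constant $2C$. The counting only gives $t_{N-1}\ge x+(N-1)\tfrac{x}{x+h}\eta(x+h)$, hence $N<1+\tfrac{h}{\eta(x+h)}\bigl(1+\tfrac hx\bigr)$, and the integer $N$ can exceed $2h/\eta(x+h)$: for instance with $\eta(t)=0.364\,t$ and $x=1.111\,h$ one has $\eta(x+h)\approx 0.769\,h$, $t_2<x+h$, so $N=3$ while $2h/\eta(x+h)\approx 2.6$; if $\omega$ is flat between $\eta(x+h)/2$ and $h$, your resulting bound $NC\omega(h)$ is strictly weaker than the target $2C\,\tfrac{h}{\eta(x+h)}\,\omega(h)$. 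So the preliminary reduction to $x>h$ does not repair the off-by-one; at best you prove the lemma with a worse constant (about $3C$) in this regime. The paper avoids iteration altogether: its cases (c) and (d) use exactly one application of \eqref{eq_g2_1} at a well-chosen point ($t=x$, or $t$ with $\eta(t)=h$) combined with the concavity of $\psi$, which is what delivers the constant $2$. Your case distinction by $h$ versus $\eta(x+h)$ and the telescoping idea are natural, but as written the proof is incorrect in both branches described above.
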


\begin{proof}
Let $ x,h>0 $. We will distinguish four cases.

(a) $ x\leq h $: Then $ \eta(x+h)\leq\eta(2h)\leq 2\eta(h) $ by Remark \ref{re_concave} (i), (iii). And so, using  Remark \ref{re_additive} (i) and \eqref{eq_g2_2}, we obtain
\begin{align*}
&\psi(x+h)-\psi(x)\leq\psi(h)\leq C\max\bigg\lbrace 1,\frac{h}{\eta(h)}\bigg\rbrace\omega(h)\\
&\leq C\max\bigg\lbrace 1,\frac{2h}{\eta(x+h)}\bigg\rbrace\omega(h)\leq 2C\max\bigg\lbrace 1,\frac{h}{\eta(x+h)}\bigg\rbrace\omega(h).
\end{align*}

(b) $ h\leq\eta(h) $: Then, according to Remark \ref{re_additive} (i) and \eqref{eq_g2_2}, we have
\begin{align*}
\psi(x+h)-\psi(x)&\leq\psi(h)\leq C\max\bigg\lbrace 1,\frac{h}{\eta(h)}\bigg\rbrace\omega(h)=C\omega(h)\\
&\leq 2C\max\bigg\lbrace 1,\frac{h}{\eta(x+h)}\bigg\rbrace\omega(h).
\end{align*}

(c) $ \eta(x)\leq h\leq x $: Then $ \eta(x+h)\leq\eta(2x)\leq 2\eta(x) $ by Remark \ref{re_concave} (i), (iii). Thus, using the concavity of $ \psi $ and \eqref{eq_g2_1}, we obtain
\begin{align*}
&\psi(x+h)-\psi(x)=h\frac{\psi(x+h)-\psi(x)}{h}\leq h\frac{\psi(x+\eta(x))-\psi(x)}{\eta(x)}\\
&\leq C\frac{h}{\eta(x)}\omega(\eta(x))\leq 2C\frac{h}{\eta(x+h)}\omega(h)\leq 2C\max\bigg\lbrace 1,\frac{h}{\eta(x+h)}\bigg\rbrace\omega(h).
\end{align*}

(d) Suppose that none of cases (a), (b) or (c) holds: Then clearly $ h\leq x $ and $ \eta(h)\leq h\leq\eta(x) $. Since $ \eta $ is continuous, there exists $ t\in [h,x] $ such that $ \eta(t)=h $. Hence, using the concavity of $ \psi $ and \eqref{eq_g2_1}, we obtain
\begin{align*}
\psi(x+h)-\psi(x)&=\eta(t)\frac{\psi(x+\eta(t))-\psi(x)}{\eta(t)}\leq\eta(t)\frac{\psi(t+\eta(t))-\psi(t)}{\eta(t)}\\&\leq C\omega(\eta(t))\leq 2C\max\bigg\lbrace 1,\frac{h}{\eta(x+h)}\bigg\rbrace\omega(h).
\end{align*}
\end{proof}

Now we can prove the crucial proposition of this article, which we will need to prove Theorem \ref{th_main} below. However, we believe that it is also interesting in itself.

\begin{proposition}\label{pr_main}
Let $ \eta:(0,\infty)\to(0,\infty) $ and $ \omega\in\M $ be both concave. Suppose that $ \omega(x)>0 $ for all $ x>0 $, and set
\begin{align*}
G:=\lbrace(x,y)\in\R^2:x>0,\;\vert y\vert<\eta(x)\rbrace .
\end{align*}
Then there exists a concave and strictly positive $ g\in C^1((0,\infty)) $ such that the function $ f $ defined by
\begin{align*}
f(x,y)=g(x)y,\quad (x,y)\in G,
\end{align*}
has the following properties:
\begin{itemize}
\item[(i)] $ f $ is both $ \omega $-semiconvex and $ \omega $-semiconcave.
\item[(ii)] If $ \varphi\in\M $ satisfies $ \liminf_{x\to\infty}\varphi(x)/\omega_{\eta}(x)=0 $, then there is no $ C>0 $ such that
\begin{align}\label{eq_varphi1}
\bigg\vert\frac{\partial f}{\partial y}(x,0)-\frac{\partial f}{\partial y}(1,0)\bigg\vert\leq C\varphi(\vert x-1\vert),\quad x>0,
\end{align}
in particular, $ f\notin C^{1,\varphi}(G) $.
\end{itemize}
\end{proposition}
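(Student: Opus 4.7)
The plan is to let $g$ be essentially the concave majorant of $\omega_\eta$ furnished by Lemma~\ref{le_envelope}, after translating by a positive constant to keep $g$ positive and smoothing to obtain $C^1$-regularity. Then $f(x,y)=g(x)y$ fits Lemma~\ref{le_g1}, with the three inequalities needed there flowing from the key inequality \eqref{eq_envelope} and the comparability $\omega_\eta\le\psi\le 2\omega_\eta$ delivered by Lemma~\ref{le_envelope}, via Lemma~\ref{le_g2}.

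\emph{Step 1 (Construction).} Let $\psi$ be the concave modulus from Lemma~\ref{le_envelope}; it is differentiable outside a countable set. A standard mollification (e.g., convolve the non-increasing right derivative $\psi'$ with a tiny bump and reintegrate) yields $\psi_0\in C^1((0,\infty))$ that is still concave, non-decreasing, comparable to $\omega_\eta$ up to constants, and still satisfies \eqref{eq_envelope} with slightly enlarged constants. Set $g:=\psi_0+M$ with $M>0$ chosen so that $g>0$ on $(0,\infty)$; then $g$ is concave, strictly positive, and $C^1$, and has the explicit form $f(x,y)=g(x)y$ required.

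\emph{Step 2 (Verifying Lemma~\ref{le_g1}).} For \eqref{eq_g1_2}, concavity of $g$ gives $\eta(x)g'(x)\le g(x+\eta(x))-g(x)$; when $\eta(x)\le x$ this is bounded by $C\omega(\eta(x))$ by \eqref{eq_envelope}, and the residual case $\eta(x)>x$ (only for small $x$) is handled from $\psi_0\le C\omega_\eta$ and Remark~\ref{re_trivial}. For \eqref{eq_g1_3}, sandwich $g'(x)-g'(x+h)$ between one-sided secants and invoke \eqref{eq_envelope} together with subadditivity of $\eta$ (which follows from its concavity and positivity). For \eqref{eq_g1_1}, apply Lemma~\ref{le_g2} to $\psi_0$: its hypothesis \eqref{eq_g2_1} is exactly \eqref{eq_envelope}, while \eqref{eq_g2_2} follows from $\psi_0\le C\omega_\eta$ combined with Remark~\ref{re_trivial}, delivering $|g(x+h)-g(x)|\le C\max\{1,h/\eta(x+h)\}\omega(h)$.

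\emph{Step 3 (Deducing (i) and (ii)).} Lemma~\ref{le_g1} now yields that $f=gy$ is both $(5C\omega)$-semiconvex and $(5C\omega)$-semiconcave; replacing $g$ by $g/(5C)$ (and enlarging $M$ to preserve positivity) upgrades this to $\omega$-semiconvexity and $\omega$-semiconcavity. For (ii), $\psi_0\ge c_0\omega_\eta$ together with subadditivity of $\omega_\eta$ (Lemma~\ref{le_modulus}(ii)) gives $g(x)-g(1)\ge c\,\omega_\eta(x-1)$ for all sufficiently large $x$, in the nontrivial case that $\omega_\eta$ is unbounded (if $\omega_\eta$ were bounded, monotonicity forces $\varphi\equiv 0$ under the hypothesis, trivializing (ii)). If \eqref{eq_varphi1} held for some $C>0$, pick $x_k\to\infty$ with $\varphi(x_k)/\omega_\eta(x_k)\to 0$: then $c\,\omega_\eta(x_k-1)\le C\varphi(x_k-1)$ for all large $k$, contradicting the choice of the sequence.

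\emph{Main obstacle.} The heart of the argument is Step~2, especially extracting \eqref{eq_g1_1} via Lemma~\ref{le_g2}: the four-case proof of that lemma was tailored to exactly this setting, but keeping quantitative control through the mollification of $\psi$ (while preserving \eqref{eq_envelope} up to constants) is the most delicate bookkeeping. The remaining pieces — positivity, concavity, $C^1$-regularity, and the lower bound on $g(x)-g(1)$ — then follow directly from what Lemma~\ref{le_envelope} packages.
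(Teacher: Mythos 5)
Your overall skeleton matches the paper's (concave envelope $\psi$ of $\omega_{\eta}$ from Lemma \ref{le_envelope}, then Lemma \ref{le_g2} for the increment bound, then Lemma \ref{le_g1}, then the lower bound $g(x)-g(1)\gtrsim\omega_{\eta}(x)$ for (ii)), and your treatment of \eqref{eq_g1_1}, \eqref{eq_g1_2} and of part (ii) is essentially sound. The genuine gap is \eqref{eq_g1_3}. Setting $g$ equal to a tiny-scale mollification of $\psi$ cannot give $\eta(x)\vert g'(x)-g'(x+h)\vert\leq C\omega(h)$ for \emph{all} $x,h>0$: the increment bound \eqref{eq_uniform}-type information only controls secant slopes, and your ``sandwich between one-sided secants'' argument yields at best $\eta(x)(g'(x)-g'(x+h))\leq C\,\eta(x)\,\omega(h)/h$ (for $h\leq\eta(x+h)$), which is worse than the target by the factor $\eta(x)/h$. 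More concretely, the concave envelope $\psi$ may have corners: take $\eta(t)=t$ and $\omega(t)=\min\lbrace t,1\rbrace$, for which $\omega_{\eta}=\omega$ is already concave, so $\psi=\omega$ has a derivative drop of size $1$ at $t=1$ while $\eta(1)=1$. Mollifying at a fixed small scale $\varepsilon$ merely spreads that drop over an interval of length $\sim\varepsilon$; taking $x$ just left of $1$ and $h\sim\varepsilon$ gives $\eta(x)(g'(x)-g'(x+h))\approx 1$ versus $C\omega(h)\approx C\varepsilon$, so \eqref{eq_g1_3} fails no matter how small $\varepsilon$ is (and indeed the resulting $f(x,y)=g(x)y$ fails to be $\omega$-semiconcave near the kink, so this is not a gap you can close by a cleverer proof for the same $g$).

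What the paper does at exactly this point, and what is missing from your proposal, is a scale-adapted (not fixed-scale) regularization: it does not take $g=\psi+M$, but defines $\delta$ as the piecewise-affine interpolation of $\psi'_+$ at the dyadic points $2^na$ and sets $g(x)=\frac{1}{b}\int_a^{a+x}\delta$. Because $g'$ is affine on each dyadic block of length comparable to $x$ (hence at least a fixed multiple of $\eta(x)/q$ by \eqref{eq_defc}), the slope $q_n$ of $g'$ on a block can be estimated through the bound $\eta(u)g'(u)\leq C\omega(\eta(u))$ (the paper's \eqref{eq_ng}--\eqref{eq_affine}), which is precisely what makes \eqref{eq_th_3}, i.e. \eqref{eq_g1_3}, come out with a uniform constant. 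So the ``delicate bookkeeping'' you defer is in fact the main new construction of the proof; to repair your argument you would have to replace the tiny-bump mollification by this (or an equivalent) smoothing at scale proportional to $x$, and then prove \eqref{eq_g1_3} via the block-slope estimate rather than via secant comparison.
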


\begin{proof}
Firstly, note that $ \eta $ is non-decreasing by Remark \ref{re_concave} (i). So it follows from Lemma \ref{le_lowerestimate} that $ \omega_{\eta}(x)>0 $ for all $ x>0 $. Hence by Lemma \ref{le_envelope} there exists a concave $ \psi\in\M $ such that \eqref{eq_g2_1} holds with $ C=2 $ and 
\begin{align}\label{eq_equivalent}
0<\omega_{\eta}(x)\leq\psi(x)\leq 2\omega_{\eta}(x),\quad x>0.
\end{align}
Further, it follows from \eqref{eq_equivalent} and Remark \ref{re_trivial} that \eqref{eq_g2_2} holds with $ C=2 $. Hence by Lemma \ref{le_g2} we have
\begin{align}\label{eq_uniform}
\psi(x+h)-\psi(x)&\leq 4\max\bigg\lbrace 1,\frac{h}{\eta(x+h)}\bigg\rbrace\omega(h),\quad x,h>0.
\end{align}
Note that since $ \psi $ is concave, we have
\begin{align*}
\psi(x)=\int_{0}^{x}\psi'_+(s)ds,\quad x\geq 0.
\end{align*}
Since $ \psi\in\M $ and $ \psi(x)>0 $ for every $ x>0 $, there exists $ a>0 $ such that
\begin{align}\label{eq_constant}
\psi'_+(4a)>0.
\end{align}
By Remark \ref{re_concave} (ii) we have
\begin{align}\label{eq_defc}
\frac{\eta(x)}{x}\leq\frac{\eta(a)}{a}\leq q:=\max\bigg\lbrace 1,\frac{\eta(a)}{a}\bigg\rbrace,\quad x\geq a.
\end{align}
Denote by $ \delta $ the real function on $ [a,\infty) $ that is affine on each $ [2^{n-1}a,2^na] $ ($ n\in\N $) and 
\begin{align*}
\delta(2^na)=\psi'_+(2^na),\quad n\in\N\cup\lbrace 0\rbrace.
\end{align*}
Then $ \delta $ is continuous and non-negative. Since $ \psi'_+ $ is non-increasing, $ \delta $ is also non-increasing. Hence for every $ n\in\N $ and $ x\in [2^{n-1}a,2^na] $ we have
\begin{align*}
\psi'_+(2x)\leq\psi'_+(2^na)=\delta(2^na)\leq\delta(x)\leq\delta(2^{n-1}a)=\psi'_+(2^{n-1}a)\leq\psi'_+\bigg(\frac{x}{2}\bigg)
\end{align*}
and thus
\begin{align}\label{eq_psideltapsi}
\psi'_+(2x)\leq\delta(x)\leq\psi'_+\bigg(\frac{x}{2}\bigg),\quad x\geq a.
\end{align}

We set $ b:=1280q^2 $ and
\begin{align*}
g(x):=\frac{1}{b}\int_{a}^{a+x}\delta(s)ds,\quad x>0.
\end{align*}
Then $ g $ is concave and $ g\in C^1((0,\infty)) $. Furthermore, for all $ x>0 $ we obtain using \eqref{eq_psideltapsi} and \eqref{eq_constant} 
\begin{align*}
g(x)&\geq\frac{1}{b}\int_{a}^{\min\lbrace a+x,2a\rbrace}\delta(s)ds\geq\frac{1}{b}\int_{a}^{\min\lbrace a+x,2a\rbrace}\psi'_+(2s)ds\\
&\geq\frac{1}{b}\int_{a}^{\min\lbrace a+x,2a\rbrace}\psi'_+(4a)ds>0.
\end{align*}

(i): Note that $ g' $ is non-increasing and, by Remark \ref{re_concave} (i), also non-negative. Note also that $ 8/b\leq 8q/b\leq 1/5 $. Hence by Lemma \ref{le_g1} (with $ C=1/5 $) it is sufficient to show that
\begin{align}
g(x+h)-g(x)&\leq\frac{8}{b}\max\bigg\lbrace 1,\frac{h}{\eta(x+h)}\bigg\rbrace\omega(h),\quad x,h>0,\label{eq_th_1}\\
\eta(x)g'(x)&\leq\frac{8q}{b}\omega(\eta(x)),\quad x>0,\label{eq_th_2}\\
\eta(x)(g'(x)-g'(x+h))&\leq\frac{1}{5}\omega(h),\quad x,h>0.\label{eq_th_3}
\end{align}

Inequality \eqref{eq_th_1}: For all $ x,h>0 $ we have by \eqref{eq_psideltapsi}, \eqref{eq_uniform} and Remark \ref{re_concave} (iii)
\begin{align*}
&g(x+h)-g(x)\\
&=\frac{1}{b}\int_{a+x}^{a+x+h}\delta(s)ds\leq\frac{1}{b}\int_{a+x}^{a+x+h}\psi'_+\bigg(\frac{s}{2}\bigg)ds=\frac{2}{b}\int_{\frac{a+x}{2}}^{\frac{a+x+h}{2}}\psi'_+(t)dt\\
&=\frac{2}{b}\bigg(\psi\bigg(\frac{a+x+h}{2}\bigg)-\psi\bigg(\frac{a+x}{2}\bigg)\bigg)\leq\frac{8}{b}\max\bigg\lbrace 1,\frac{\frac{h}{2}}{\eta(\frac{a+x+h}{2})}\bigg\rbrace\omega\bigg(\frac{h}{2}\bigg)\\
&\leq\frac{8}{b}\max\bigg\lbrace 1,\frac{\frac{h}{2}}{\frac{\eta(a+x+h)}{2}}\bigg\rbrace\omega\bigg(\frac{h}{2}\bigg)\leq\frac{8}{b}\max\bigg\lbrace 1,\frac{h}{\eta(x+h)}\bigg\rbrace\omega(h).
\end{align*}

Inequality \eqref{eq_th_2}: Let $ x>0 $. Then we have by \eqref{eq_defc}
\begin{align*}
t:=\frac{\eta(x)}{2q}\leq\frac{\eta(a+x)}{2q}\leq\frac{a+x}{2}
\end{align*}
and by Remark \ref{re_concave} (iii)
\begin{align*}
\frac{t}{\eta(\frac{a+x}{2})}=\frac{\eta(x)}{2q\eta(\frac{a+x}{2})}\leq\frac{\eta(\frac{x}{2})}{q\eta(\frac{a+x}{2})}\leq\frac{1}{q}\leq 1.
\end{align*}
Consequently, using the concavity of $ \psi $, \eqref{eq_psideltapsi} and \eqref{eq_uniform}, we obtain
\begin{align*}
\eta(x)g'(x)&=\frac{1}{b}\eta(x)\delta(a+x)\leq\frac{1}{b}\eta(x)\psi'_+\bigg(\frac{a+x}{2}\bigg)\\
&\leq\frac{\eta(x)}{b}\dfrac{\psi(\frac{a+x}{2})-\psi(\frac{a+x}{2}-t)}{t}=\frac{2q}{b}\bigg(\psi\bigg(\frac{a+x}{2}\bigg)-\psi\bigg(\frac{a+x}{2}-t\bigg)\bigg)\\
&\leq\frac{8q}{b}\max\bigg\lbrace 1,\frac{t}{\eta(\frac{a+x}{2})}\bigg\rbrace\omega(t)=\frac{8q}{b}\omega\bigg(\frac{\eta(x)}{2q}\bigg)\leq\frac{8q}{b}\omega(\eta(x)).
\end{align*}

Inequality \eqref{eq_th_3}: By \eqref{eq_th_2} and Remark \ref{re_concave} (iii) we have
\begin{multline}\label{eq_ng}
\eta(u)(g'(u)-g'(v))\leq\eta(u)g'(u)\leq\frac{8q}{b}\omega(\eta(u))\leq\frac{8q}{b}\omega(4q(v-u))\\\leq\frac{32q^2}{b}\omega(v-u)=\frac{1}{40}\omega(v-u),\quad 0<u<v,\eta(u)\leq 4q(v-u).
\end{multline}
Set
\begin{align*}
x_n:=2^na-a,\quad n\in\N\cup\lbrace 0\rbrace.
\end{align*}
Then by Remark \ref{re_concave} (iii) we have
\begin{align*}
\eta(x_n)\leq 2\eta\bigg(\frac{x_n}{2}\bigg)\leq 2\eta\bigg(\frac{x_{n-1}+x_n}{2}\bigg),\quad n\in\N,
\end{align*}
and by \eqref{eq_defc}
\begin{align*}
\eta\bigg(\frac{x_{n-1}+x_n}{2}\bigg)\leq\eta(x_n)\leq qx_n\leq q\cdot 2^na=4q\frac{x_n-x_{n-1}}{2},\quad n\in\N.
\end{align*}
Hence by \eqref{eq_ng} (for every $ n\in\N $ applied with $ u=(x_{n-1}+x_n)/2 $ and $ v=x_n $) we obtain
\begin{multline}\label{eq_affine}
q_n:=\frac{g'(\frac{x_{n-1}+x_n}{2})-g'(x_n)}{x_n-\frac{x_{n-1}+x_n}{2}}\leq
\frac{1}{40\eta(\frac{x_{n-1}+x_n}{2})}\frac{\omega(\frac{x_n-x_{n-1}}{2})}{\frac{x_n-x_{n-1}}{2}}\\
\leq\frac{1}{20\eta(\frac{x_{n-1}+x_n}{2})}\frac{\omega(x_n-x_{n-1})}{x_n-x_{n-1}}\leq\frac{1}{10\eta(x_n)}\frac{\omega(x_n-x_{n-1})}{x_n-x_{n-1}},\quad n\in\N.
\end{multline}
Since $ g' $ is affine on $ (x_0,x_1] $ and on each $ [x_{n-1},x_n] $ ($ n\in\N $, $ n>1 $), we obtain by \eqref{eq_affine} and Remark \ref{re_concave} (ii)
\begin{multline}\label{eq_eta}
\eta(u)(g'(u)-g'(v))=\eta(u)q_n(v-u)\leq\frac{\eta(u)}{10\eta(x_n)}\frac{\omega(x_n-x_{n-1})}{x_n-x_{n-1}}(v-u)\\
\leq\frac{\eta(u)}{10\eta(x_n)}\frac{\omega(v-u)}{v-u}(v-u)\leq\frac{1}{10}\omega(v-u),\quad n\in\N,u,v\in[x_{n-1},x_n],0<u<v.
\end{multline}
Let $ x,h>0 $. Then there exists $ n\in\N $ such that $ x_{n-1}\leq x\leq x_n $. If $ \eta(x)\leq4qh $ or $ x+h\leq x_n $, then \eqref{eq_th_3} holds by \eqref{eq_ng} or \eqref{eq_eta}. Hence we may further suppose that $ 4qh\leq\eta(x) $ and $ x_n\leq x+h $. Then we have by \eqref{eq_defc}
\begin{align*}
x+h\leq x+\frac{\eta(x)}{4q}\leq x_n+\frac{\eta(x_n)}{4q}\leq x_n+\frac{x_n}{4}\leq 2x_n\leq x_{n+1}
\end{align*}
and thus, using two-times \eqref{eq_eta}, we obtain
\begin{align*}
\eta(x)(g'(x)-g'(x+h))&=\eta(x)(g'(x)-g'(x_n))+\eta(x)(g'(x_n)-g'(x+h))\\
&\leq\eta(x)(g'(x)-g'(x_n))+\eta(x_n)(g'(x_n)-g'(x+h))\\
&\leq\frac{1}{10}\omega(x_n-x)+\frac{1}{10}\omega(x+h-x_n)\leq\frac{1}{5}\omega(h).
\end{align*}

(ii): Let $ \varphi\in\M $ satisfy $ \liminf_{x\to\infty}\varphi(x)/\omega_{\eta}(x)=0 $. Suppose to the contrary that there exists $ C>0 $ such that \eqref{eq_varphi1} holds. Hence
\begin{align}\label{eq_varphi2}
\vert g(x)-g(1)\vert=\bigg\vert\frac{\partial f}{\partial y}(x,0)-\frac{\partial f}{\partial y}(1,0)\bigg\vert\leq C\varphi(\vert x-1\vert),\quad x>0.
\end{align}
If $ \varphi(x)=0 $ for all $ x>0 $, then \eqref{eq_varphi2} implies that $ g $ is constant, and thus by \eqref{eq_psideltapsi}
\begin{align*}
\psi'_+(4a)\leq\delta(2a)=bg'(a)=0
\end{align*}
which is a contradiction with \eqref{eq_constant}. So we may further suppose that $ \varphi(x_0)>0 $ for some $ x_0>0 $. Then
\begin{align*}
0<\frac{\varphi(x_0)}{\omega_{\eta}(x)}\leq\frac{\varphi(x)}{\omega_{\eta}(x)},\quad x>x_0,
\end{align*}
and thus $ \liminf_{x\to\infty}\varphi(x_0)/\omega_{\eta}(x)=0 $. Since $ \omega_{\eta} $ is non-decreasing by Lemma \ref{le_modulus} (i), it follows that $ \lim_{x\to\infty}\omega_{\eta}(x)=\infty $. Now by \eqref{eq_psideltapsi} and \eqref{eq_equivalent} we have
\begin{align*}
2b(g(x)-g(1))&=2\int_{a+1}^{a+x}\delta(s)ds\geq 2\int_{a+1}^{a+x}\psi'_+(2s)ds=\int_{2(a+1)}^{2(a+x)}\psi'_+(t)dt\\
&=\psi(2(a+x))-\psi(2(a+1))\geq\psi(x)-\psi(2(a+1))\\
&\geq\omega_{\eta}(x)-\psi(2(a+1)),\quad x>1,
\end{align*}
and thus by \eqref{eq_varphi2}
\begin{align*}
\omega_{\eta}(x)-\psi(2(a+1))\leq 2b(g(x)-g(1))\leq 2bC\varphi(x-1)\leq 2bC\varphi(x),\quad x>1.
\end{align*}
Hence
\begin{align*}
1-\frac{\psi(2(a+1))}{\omega_{\eta}(x)}\leq 2bC\frac{\varphi(x)}{\omega_{\eta}(x)},\quad x>1,
\end{align*}
which is a contradiction with $ \liminf_{x\to\infty}\varphi(x)/\omega_{\eta}(x)=0 $ and $ \lim_{x\to\infty}\omega_{\eta}(x)=\infty $.
\end{proof}

\section{The case $ \R^n $}

The main result (Theorem \ref{th_main}) of the article works with $ \omega\in\M $ that satisfy 
\begin{align}\label{de_condition}
\inf_{n\in\N}\bigg(\liminf_{h\to\infty}\frac{\omega(h)}{n\omega\big(\frac{h}{n}\big)}\bigg)=0.\tag{*} 
\end{align}
An immediate consequence of Theorem \ref{th_main} is Theorem \ref{th_1}, since we have:
\begin{equation}\label{eq_condition}
\textnormal{If }\alpha\in (0,1)\textnormal{ and }\omega(h)=h^{\alpha}\textnormal{ for all }h\geq 0,\textnormal{ then }\eqref{de_condition}\textnormal{ holds}.
\end{equation}
Indeed, if $ \alpha\in (0,1) $ and $ \omega(h)=h^{\alpha} $ for all $ h\geq 0 $, then
\begin{align*}
\liminf_{h\to\infty}\frac{\omega(h)}{n\omega\big(\frac{h}{n}\big)}=\frac{1}{n^{1-\alpha}} ,\quad n\in\N,
\end{align*}
and thus \eqref{de_condition} holds.

The following remark concerns the validity of \eqref{de_condition} for some other $ \omega\in\M $.

\begin{remark}\label{re_condition}
\hfill
\begin{itemize}
\item[(i)] If $ \alpha\in [0,1) $ and $ \beta>0 $, then there exist $ C\geq 0 $, $ h_0>1 $ and a concave $ \omega\in\M $ such that $ \omega(h)=h^{\alpha}\log^{\beta}(h)+C $ for all $ h\geq h_0 $ and \eqref{de_condition} holds.
\item[(ii)] If $ \omega(h)=h $ for all $ h\geq 0 $, then \eqref{de_condition} doesn't hold.
\item[(iii)] If $ \beta>0 $, then there exist $ C\geq 0 $, $ h_0>1 $ and a concave $ \omega\in\M $ such that $ \omega(h)=h/\log^{\beta}(h)+C $ for all $ h\geq h_0 $ and   \eqref{de_condition} doesn't hold.
\end{itemize}
All these facts can be easily proved.
\end{remark}

The main property of concave $ \omega\in\M $ satisfying \eqref{de_condition} is that assertion (iii) of the following lemma holds.

\begin{lemma}\label{le_condition}
Let $ \omega\in\M $ be concave and satisfy \eqref{de_condition}. Then the following hold: 
\begin{itemize}
\item[(i)] $ \omega(h)>0 $ for all $ h>0 $ and $ \liminf_{h\to 0+}\omega(h)/h>0 $.
\item[(ii)] $ \lim_{h\to\infty}\omega(h)/h=0 $.
\item[(iii)] If $ \eta:(0,\infty)\to(0,\infty) $ is non-decreasing and $ \lim_{h\to\infty}\eta(h)/h=0 $, then 
\begin{align}\label{eq_liminf}
\liminf_{h\to\infty}\frac{\omega(h)}{\omega_{\eta}(h)}=0.
\end{align}
\end{itemize}
\end{lemma}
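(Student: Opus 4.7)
The plan is to dispatch the three parts in order, leveraging two facts: the monotonicity of $t\mapsto\omega(t)/t$ (Remark~\ref{re_concave}~(ii)) and the lower bound $\omega_{\eta}(h)\geq h\omega(\eta(h))/\eta(h)$ from Lemma~\ref{le_lowerestimate}.

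For (i), I would first observe that \eqref{de_condition} forces $\omega\not\equiv 0$ (otherwise the ratio $\omega(h)/(n\omega(h/n))$ is undefined, or at least there is no meaningful interpretation making the infimum equal to $0$). Pick $h_1>0$ with $\omega(h_1)>0$. For $h\leq h_1$, Remark~\ref{re_concave}~(iii) applied with $c=h_1/h\geq 1$ and $t=h$ yields $\omega(h_1)\leq(h_1/h)\omega(h)$, hence $\omega(h)\geq(h/h_1)\omega(h_1)>0$; for $h\geq h_1$ use monotonicity. Thus $\omega>0$ on $(0,\infty)$. Since $\omega(h)/h$ is non-increasing, $\liminf_{h\to 0+}\omega(h)/h=\sup_{h>0}\omega(h)/h>0$.

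For (ii), by Remark~\ref{re_concave}~(ii) the limit $L:=\lim_{h\to\infty}\omega(h)/h\in[0,\infty)$ exists. Suppose for contradiction $L>0$. Then for each fixed $n\in\N$, both $\omega(h)/h\to L$ and $n\omega(h/n)/h\to L$ as $h\to\infty$, hence
\begin{align*}
\frac{\omega(h)}{n\omega(h/n)}=\frac{\omega(h)/h}{n\omega(h/n)/h}\longrightarrow\frac{L}{L}=1.
\end{align*}
Therefore $\liminf_{h\to\infty}\omega(h)/(n\omega(h/n))=1$ for every $n\in\N$, forcing the infimum in \eqref{de_condition} to equal $1$ and contradicting the assumption. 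So $L=0$.

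For (iii), the key step is to show that for every $n\in\N$ and all sufficiently large $h$,
\begin{align*}
\omega_{\eta}(h)\geq n\omega(h/n).
\end{align*}
Indeed, since $\lim_{h\to\infty}\eta(h)/h=0$, we have $\eta(h)\leq h/n$ for all $h$ large. Then the monotonicity of $t\mapsto\omega(t)/t$ gives $\omega(\eta(h))/\eta(h)\geq\omega(h/n)/(h/n)=n\omega(h/n)/h$, and multiplying by $h$ and using Lemma~\ref{le_lowerestimate} yields the claimed inequality. Consequently $\omega(h)/\omega_{\eta}(h)\leq\omega(h)/(n\omega(h/n))$ for large $h$. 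Given $\varepsilon>0$, \eqref{de_condition} supplies an $n\in\N$ with $\liminf_{h\to\infty}\omega(h)/(n\omega(h/n))<\varepsilon$; thus $\liminf_{h\to\infty}\omega(h)/\omega_{\eta}(h)\leq\varepsilon$, and letting $\varepsilon\to 0$ gives \eqref{eq_liminf}. I don't anticipate any serious obstacle; the main conceptual point is just to bridge Lemma~\ref{le_lowerestimate} with \eqref{de_condition} through the monotonicity of $\omega(t)/t$.
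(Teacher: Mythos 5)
Your proposal is correct and takes essentially the same route as the paper: positivity of $\omega$ forced by \eqref{de_condition} plus concavity for (i), the monotonicity of $t\mapsto\omega(t)/t$ for (ii) (the paper gives a direct bound via $\omega(1)$ where you argue by contradiction, but the key fact is identical), and for (iii) exactly the paper's combination of Lemma \ref{le_lowerestimate} with $\eta(h)\leq h/n$ for large $h$ and condition \eqref{de_condition}. No gaps worth noting.
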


\begin{proof}
(i) It follows easily from \eqref{de_condition} and Remark \ref{re_concave} (ii).

(ii): Firstly, the limit exists by Remark \ref{re_concave} (ii). Further, using (i) and Remark \ref{re_concave} (iii), we obtain
\begin{align*}
0\leq\lim_{h\to\infty}\frac{\omega(h)}{h}=\omega(1)\lim_{h\to\infty}\frac{n\omega(h)}{\frac{h}{n}\omega(1)}\leq\liminf_{h\to\infty}\frac{n\omega(h)}{\omega(\frac{h}{n})},\quad n\in\N,
\end{align*}
and thus $ \lim_{h\to\infty}\omega(h)/h=0 $ by \eqref{de_condition}.

(iii): Let $ \varepsilon>0 $. Then, by \eqref{de_condition}, there exists $ n\in\N $ such that
\begin{align*}
\liminf_{h\to\infty}\frac{\omega(h)}{n\omega\big(\frac{h}{n}\big)}<\varepsilon.
\end{align*}
Further, there exists $ h_0>0 $ such that $ \eta(h)/h\leq 1/n $ for every $ h\geq h_0 $. Hence by (i), Remark \ref{re_concave} (ii) and Lemma \ref{le_lowerestimate}, we have
\begin{align*}
0<\frac{n\omega\big(\frac{h}{n}\big)}{\omega(h)}=\frac{h}{\omega(h)}\frac{\omega\big(\frac{h}{n}\big)}{\frac{h}{n}}\leq\frac{h}{\omega(h)}\frac{\omega(\eta(h))}{\eta(h)}\leq\frac{\omega_{\eta}(h)}{\omega(h)},\quad h\geq h_0,
\end{align*}
and thus
\begin{align*}
0\leq\liminf_{h\to\infty}\frac{\omega(h)}{\omega_{\eta}(h)}\leq\liminf_{h\to\infty}\frac{\omega(h)}{n\omega\big(\frac{h}{n}\big)}<\varepsilon.
\end{align*}
Hence \eqref{eq_liminf} holds.
\end{proof}

We will also need the following lemma:

\begin{lemma}\label{le_projection}
Let $ n\in\N $ and let $ G\subset\R^n $ be an open convex set such that $ 1\leq\dim(\spann(\rec(G)))<n $. Then there exists a linear surjection $ L:\R^n\to\R^2 $ such that
\begin{align}\label{eq_projection}
(1,0)\in\rec(L(G))=L(\rec(G))\subset\spann(\lbrace (1,0)\rbrace).
\end{align}
\end{lemma}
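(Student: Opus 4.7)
I would seek $L$ in the form $L=(L_1,L_2)$ where $L_1,L_2\colon\R^n\to\R$ are linear functionals. Set $V:=\spann(\rec(G))$ and $W:=\rec(G)\cap(-\rec(G))$, the lineality space of $\rec(G)$; both are linear subspaces of $\R^n$ with $W\subset V\subsetneq\R^n$. The inclusion $L(\rec(G))\subset\spann(\{(1,0)\})=\R\times\{0\}$ is equivalent to $L_2\equiv 0$ on $\rec(G)$, hence on $V$, and such a nonzero $L_2$ exists because $V$ is a proper subspace of $\R^n$.

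The heart of the argument is the choice of $L_1$. Pick a nonzero $v_0\in\rec(G)$ (possible since $\dim V\geq 1$), choosing $v_0\in\rec(G)\setminus W$ whenever that set is nonempty. I want $L_1$ to satisfy $L_1(v_0)=1$ together with $\rec(G)\cap\ker(L_1)\subset W$. When $\rec(G)=W$, the second condition holds trivially and any linear extension of a functional with $L_1(v_0)=1$ works. Otherwise, I pass to the quotient $\pi\colon V\to V/W$: the image $\bar C:=\pi(\rec(G))$ is a closed convex cone (closed because $W\subset\rec(G)$ gives $\rec(G)+W=\rec(G)$, making $\rec(G)$ saturated under $\pi$) and it is pointed, i.e.\ $\bar C\cap(-\bar C)=\{0\}$, precisely because $W$ is the lineality space of $\rec(G)$. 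A closed pointed convex cone in a finite-dimensional space admits a linear functional strictly positive off the origin; this is a standard separation argument, namely the compact convex set $\operatorname{conv}(\bar C\cap S)$ (where $S$ is the unit sphere of $V/W$) does not contain $0$ exactly by pointedness of $\bar C$, so $0$ can be strictly separated from it by a hyperplane. Normalizing so that the value at $\pi(v_0)$ equals $1$, lifting to $V$ via $\pi$, and extending arbitrarily to $\R^n$ yields the desired $L_1$, which by construction satisfies $L_1\equiv 0$ on $W$ and $L_1>0$ on $\rec(G)\setminus W$.

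Having constructed $L_1$ and $L_2$, surjectivity of $L$ follows from their linear independence, which in turn follows from $L_1(v_0)=1\neq 0=L_2(v_0)$. By construction $(1,0)=L(v_0)\in L(\rec(G))\subset\R\times\{0\}$. To obtain the equality $L(\rec(G))=\rec(L(G))$ I would invoke Lemma \ref{le_rock}, whose hypothesis reads $\rec(G)\cap L^{-1}(\{0\})\subset\rec(G)\cap(-\rec(G))=W$. Since $\ker(L_2)\supset V\supset\rec(G)$, we have $\rec(G)\cap L^{-1}(\{0\})=\rec(G)\cap\ker(L_1)$, and this is contained in $W$ because $L_1>0$ on $\rec(G)\setminus W$. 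Thus Lemma \ref{le_rock} applies and gives the required equality, completing \eqref{eq_projection}.

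The main obstacle is the construction of $L_1$ in the non-degenerate case, which rests on the (standard) fact that closed pointed convex cones in finite-dimensional space admit strictly positive linear functionals; once the quotient picture $V/W$ is set up, the remaining pieces — surjectivity, membership of $(1,0)$, and verification of the hypothesis of Lemma \ref{le_rock} — are routine bookkeeping.
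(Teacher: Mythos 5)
Your argument is correct, but it takes a genuinely different route from the paper's. The paper proves Lemma \ref{le_projection} by induction on $n$: at each step it constructs a surjection $L_1:\R^n\to\R^{n-1}$ with $\rec(G)\cap L_1^{-1}(\lbrace 0\rbrace)\subset\rec(G)\cap(-\rec(G))$ and $1\leq\dim(L_1(\spann(\rec(G))))<n-1$, distinguishing three cases according to $\dim(\spann(\rec(G)))$ and the lineality space (its case (b) imports, via \cite{S} or \cite[Lemma 5.10]{KZ}, the existence of a line meeting $\rec(G)$ only at $0$), and applies Lemma \ref{le_rock} at each stage before composing. You instead build $L=(L_1,L_2)$ in one step: $L_2$ annihilates $\spann(\rec(G))$, and $L_1$ is lifted from a strictly positive functional on the pointed closed cone $\pi(\rec(G))$ in the quotient by the lineality space $W$, obtained by strictly separating $0$ from $\operatorname{conv}(\pi(\rec(G))\cap S)$; your positivity gives exactly the hypothesis $\rec(G)\cap L^{-1}(\lbrace 0\rbrace)\subset W$ of Lemma \ref{le_rock}, whose single application then yields \eqref{eq_projection}. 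All the steps check out: $\pi(\rec(G))$ is indeed closed (saturation under $\pi$, openness of the quotient map) and pointed (if $x,y\in\rec(G)$ and $x+y\in W$, then $-x=y-(x+y)\in\rec(G)$), so the separation argument goes through. Your route trades the paper's induction and case analysis for the standard strictly-positive-functional fact, which is essentially the same ingredient the paper's case (b) cites, used once globally rather than inside a recursion; this makes your proof shorter and arguably more transparent. Two points you leave implicit but which are immediate: surjectivity of $L$ uses $L_2\neq 0$ in addition to $L_1(v_0)=1$, $L_2(v_0)=0$; and Lemma \ref{le_rock} requires $G$ unbounded, which follows from $\rec(G)\neq\lbrace 0\rbrace$.
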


\begin{proof}
We will proceed by induction on $ n $. If $ n=2 $, then $ \dim(\spann(\rec(G)))=1 $ and thus there exists a linear bijection $ L:\R^n\to\R^2 $ such that $ (1,0)\in L(\rec(G))\subset\spann(\lbrace (1,0)\rbrace) $. Hence \eqref{eq_projection} holds by Lemma \ref{le_rock}.

Now suppose that $ n\geq 3 $ is given and the assertion holds for $ n-1 $. It follows easily from Lemma \ref{le_rec} (ii) that $ W:=\rec(G)\cap(-\rec(G)) $ is a linear subspace of $ \spann(\rec(G)) $. Thus
\begin{align*}
m:=\dim(W)\leq k:=\dim(\spann(\rec(G)))
\end{align*}
and there exist $ b_1,\dots,b_k\in\R^n $ such that $ \lbrace b_1,\dots,b_m\rbrace $ is an algebraic basis of $ W $ and $ \lbrace b_1,\dots,b_k\rbrace $ an algebraic basis of $ \spann(\rec(G)) $. We will show that there exists a linear surjection $ L_1:\R^n\to\R^{n-1} $ such that
\begin{align}\label{eq_1}
\rec(G)\cap L_1^{-1}(\lbrace 0\rbrace)\subset W,\;1\leq\dim(L_1(\spann(\rec(G))))<n-1.
\end{align}
We will distinguish three cases:

(a) $ k<n-1 $: Then it is easy to find a linear surjection $ L_1:\R^n\to\R^{n-1} $ such that $ \spann(\rec(G))\cap L_1^{-1}(\lbrace 0\rbrace)=\lbrace 0\rbrace $. Then
\begin{align*}
L_1(\spann(\rec(G)))=\spann(\lbrace L_1(b_1),\dots,L_1(b_k)\rbrace)
\end{align*}
and thus \eqref{eq_1} follows.

(b) $ m=0 $ and $ k=n-1 $: Then it is easy (cf. \cite[Lemma 5.10]{KZ} or see \cite[Consequence 2]{S}) to find $ w\in\spann\lbrace b_1,b_2\rbrace\setminus\lbrace 0\rbrace $ such that $ \rec(G) \cap\spann(\lbrace w\rbrace)=\lbrace 0\rbrace $. Then there exists a linear surjection $ L_1:\R^n\to\R^{n-1} $ such that $ L_1^{-1}(\lbrace 0\rbrace)=\spann(\lbrace w\rbrace) $. Since $ L_1(w)=0 $ and $ w\in\spann\lbrace b_1,b_2\rbrace $, it follows that $ L_1(b_1) $ and $ L_1(b_2) $ are linearly dependent and thus  
\begin{align}\label{eq_2}
L_1(\spann(\rec(G)))=\spann(\lbrace L_1(b_2),\dots,L_1(b_k)\rbrace).
\end{align}
Hence \eqref{eq_1} holds. 

(c) $ 1\leq m\leq k=n-1 $: Then we choose a linear surjection $ L_1:\R^n\to\R^{n-1} $ such that $ L_1^{-1}(\lbrace 0\rbrace)=\spann(\lbrace b_1\rbrace) $. Then $ L_1^{-1}(\lbrace 0\rbrace)\subset W $ and \eqref{eq_2} holds. Consequently \eqref{eq_1} holds.

So there exists $ L_1 $ for which \eqref{eq_1} holds. Then $ L_1(G) $ is an open convex set and
\begin{align}\label{eq_3}
L_1(\rec(G))=\rec(L_1(G))
\end{align}
by Lemma \ref{le_rock}. Thus 
\begin{align*}
\spann(\rec(L_1(G)))&=\spann(L_1(\rec(G)))=L_1(\spann(\rec(G)))
\end{align*}
and hence $ 1\leq\dim(\spann(\rec(L_1(G))))<n-1 $. Now by induction assumption there exists a linear surjection $ L_2:\R^{n-1}\to\R^2 $ such that
\begin{align}\label{eq_4}
(1,0)\in\rec(L_2(L_1(G)))=L_2(\rec(L_1(G)))\subset\spann(\lbrace (1,0)\rbrace).
\end{align}
Then $ L:=L_2\circ L_1 $ is a linear surjection and it follows from \eqref{eq_3} and \eqref{eq_4} that \eqref{eq_projection} holds. 
\end{proof}

\begin{remark}
The proof of the preceding lemma is similar to the proof of \cite[Proposition 5.13]{KZ}. Let us note that the proof of \cite[Proposition 5.13]{KZ} is incomplete. Namely, in case (b1) we forgot to consider the case when $ H $ is bounded. This can be easily corrected as follows: If $ H $ is bounded, then clearly $ \dim(\spann(\rec(G)))=1 $ and thus it is easy to find a linear surjection $ L:\R^n\to\R^{n-1} $ that satisfies \cite[(24) and (25)]{KZ}, and so we can finish the proof as in case (b2).
\end{remark}

Now we will prove the main theorem of the article. Recall that it implies Theorem \ref{th_1} by \eqref{eq_condition} and that it can also be applied to other interesting moduli than power type moduli (see Remark \ref{re_condition} (i)).

\begin{theorem}\label{th_main}
Let $ G\subset\R^n $ ($ n\geq 2 $) be an unbounded open convex set that doesn't contain a translation of a cone with non-empty interior. Let $ \omega\in\M $ be concave and satisfy \eqref{de_condition}. Then there exists $ f:G\to\R $ that is both $ \omega $-semiconvex and $ \omega $-semiconcave and $ f\notin C^{1,\omega}(G) $.
\end{theorem}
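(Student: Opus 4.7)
The strategy is to reduce to the two-dimensional setting via recession cones and then invoke Proposition~\ref{pr_main}. Since $G$ is unbounded and contains no translate of a cone with non-empty interior, Lemma~\ref{le_rec}(iv),(v) gives $1\le\dim(\spann(\rec(G)))<n$, and Lemma~\ref{le_projection} supplies a linear surjection $L\colon\R^n\to\R^2$ satisfying $(1,0)\in\rec(L(G))=L(\rec(G))\subset\spann\{(1,0)\}$. If I can show $L(G)\in\mathcal{G}_2(C\omega)$ for some $C>0$, then Lemma~\ref{le_kz2} yields $G\in\mathcal{G}_n(C\omega)$; rescaling the witnessing function by $1/C$ produces the required $f\colon G\to\R$ that is $\omega$-semiconvex, $\omega$-semiconcave, and not in $C^{1,\omega}(G)$.

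Because $\rec(L(G))$ is a closed convex cone contained in $\R\times\{0\}$ and containing $(1,0)$, there are two subcases. If $\rec(L(G))=\R\times\{0\}$, then $L(G)$ is necessarily a horizontal strip $\R\times(c_1,c_2)$ with $c_1<c_2$ finite: it is a union of whole horizontal lines, and would have two-dimensional recession cone if unbounded in $y$. The hypotheses of Lemma~\ref{le_kz1} are met thanks to Lemma~\ref{le_condition}(i),(ii), and an affine translation together with a rescaling in the $y$-direction gives $L(G)\in\mathcal{G}_2(C\omega)$ for some $C>0$.

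The substantive case is $\rec(L(G))=[0,\infty)\times\{0\}$. After suitable translations I may assume that the projection of $L(G)$ onto the $x$-axis equals $(0,\infty)$ and that $L(G)$ contains a ray $[c',\infty)\times\{0\}$ for some $c'\ge 0$. Writing $\alpha(x),\beta(x)$ for the upper and lower boundaries of the slice of $L(G)$ at $x>0$, the function $\alpha$ is concave and $\beta$ is convex, and a standard argument (if $\alpha(x)/x\to C>0$ then the ray in direction $(1,C)$ starting from a suitable interior point stays in $L(G)$, forcing $(1,C)\in\rec(L(G))$ and contradicting one-dimensionality) yields $\alpha(x)/x\to 0$ and $\beta(x)/x\to 0$. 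Set $\eta^*(x):=\max(|\alpha(x)|,|\beta(x)|)$ and let $\tilde\eta$ denote its upper concave envelope on $(0,\infty)$: then $\tilde\eta$ is concave, $\tilde\eta\ge\eta^*>0$, non-decreasing by Remark~\ref{re_concave}(i), and $\tilde\eta(x)/x\to 0$ (since $\eta^*$ admits affine majorants of arbitrarily small slope). By construction $L(G)\subset G_{\tilde\eta}$, because any $(x,y)\in L(G)$ satisfies $|y|<\max(|\alpha(x)|,|\beta(x)|)\le\tilde\eta(x)$. Applying Proposition~\ref{pr_main} to $\tilde\eta$ and to $\omega$ (which is concave with $\omega(x)>0$ for all $x>0$ by Lemma~\ref{le_condition}(i)) produces $f_0(x,y)=g(x)y$ on $G_{\tilde\eta}$ that is both $\omega$-semiconvex and $\omega$-semiconcave; the restriction $f:=f_0|_{L(G)}$ inherits these properties on the convex subset $L(G)$. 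On the ray $M:=[c'+1,\infty)\times\{0\}\subset L(G)$ one has $f'(x,0)=(0,g(x))$, and Lemma~\ref{le_condition}(iii) provides $\liminf_{h\to\infty}\omega(h)/\omega_{\tilde\eta}(h)=0$, so a mild variant of Proposition~\ref{pr_main}(ii) (comparing $g(x)$ with $g(x_0)$ for any fixed $x_0\ge c'+1$, which the given proof supports verbatim) rules out any $C>0$ with $|g(x)-g(x_0)|\le C\omega(|x-x_0|)$. Hence $f'$ is not uniformly continuous on $M$ with any modulus $C\omega$, giving $L(G)\in\mathcal{G}_2(\omega)$.

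The principal obstacle is the construction of $\tilde\eta$ in this last case: the naive symmetric candidate $\alpha-\beta$ is concave as a sum of concaves, but it fails to dominate $|y|$ on $L(G)$ whenever a vertical slice of $L(G)$ lies entirely above or below the $x$-axis. One is therefore forced to pass to the upper concave envelope of $\max(|\alpha|,|\beta|)$, and then simultaneously verify its sublinearity (so that Lemma~\ref{le_condition}(iii) applies) and the inclusion $L(G)\subset G_{\tilde\eta}$ (so that $f_0$ can be restricted).
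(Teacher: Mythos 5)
Your argument is correct and follows the paper's skeleton exactly: reduce to $\R^2$ via Lemma~\ref{le_rec}(iv),(v) and Lemma~\ref{le_projection}, split according to whether $\rec(L(G))$ is the half-line or the full line, use Lemma~\ref{le_kz1} for the strip case and Proposition~\ref{pr_main} together with Lemma~\ref{le_condition}(iii) for the main case, and lift back with Lemma~\ref{le_kz2}. The genuine difference is where you locate the work. What you call the principal obstacle --- producing a concave, sublinear $\tilde\eta$ with a translate of $L(G)$ contained in $G_{\tilde\eta}$ and containing a horizontal ray --- is in the paper simply quoted from the proof of \cite[Proposition 5.13]{KZ}; your construction via the boundary functions $\alpha,\beta$ and the upper concave envelope of $\max(|\alpha|,|\beta|)$ re-derives that citation in a self-contained way (the routine facts you assert --- that the projection onto the $x$-axis is a proper half-line, that $\alpha,\beta$ are monotone and locally bounded so the envelope is finite and sublinear --- are exactly the content of that cited argument). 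A side effect of the citation is that the paper gets the whole ray $\lbrace(x,0):x>0\rbrace$ inside $L(G)+b$, so Proposition~\ref{pr_main}(ii) applies with the base point $1$ as stated, whereas your ray may start at $c'>0$ and you correctly note that the proof of (ii) works verbatim with any base point $x_0$. Finally, in the strip case the paper avoids your transport of $\mathcal{G}_2$-membership through the affine change of variables (and hence the constant $C$ and the rescaling by $1/C$) by applying Lemma~\ref{le_rock} to $Q$ and then Lemma~\ref{le_kz2} directly to the composition $Q\circ L$; your route needs the (true, but unproved in the paper) observation that $\mathcal{G}_2(\omega)$ is stable under affine bijections up to replacing $\omega$ by $C\omega$, which your use of Remark~\ref{re_concave}(iii) indeed supports. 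In short: same decomposition and same key lemmas, with your version more self-contained at the geometric step and the paper's version lighter on bookkeeping.
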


\begin{proof}
By Lemma \ref{le_rec} (iv), (v) and Lemma \ref{le_projection} there exists a linear surjection $ L:\R^n\to\R^2 $ such that
\begin{align}\label{eq_rec}
(1,0)\in\rec(L(G))=L(\rec(G))\subset\spann(\lbrace (1,0)\rbrace).
\end{align}
Then $ L(G) $ is an open convex set. We will further distinguish two cases. 

(a) $ -(1,0)\notin \rec(L(G)) $: Then it follows (see the proof of \cite[Proposition 5.13]{KZ}) from \eqref{eq_rec} that there exist $ b\in\R^2 $ and a concave $ \eta:(0,\infty)\to(0,\infty) $ such that $ \lim_{x\to\infty}\eta(x)/x=0 $ and
\begin{align}\label{eq_set}
\lbrace (x,0)\in\R^2:x>0\rbrace\subset L(G)+b\subset G_0:=\lbrace (x,y)\in\R^2:x>0,\;\vert y\vert<\eta(x)\rbrace.
\end{align}
By Proposition \ref{pr_main} and Lemma \ref{le_condition} (iii) there exists $ f_0\in C^1(G_0) $ that is both $ \omega $-semiconvex and $ \omega $-semiconcave and such that $ f'_0 $ is uniformly continuous on $ \lbrace (x,0)\in\R^2:x>0\rbrace $ with modulus $ C\omega $ for no $ C>0 $. Thus it follows from \eqref{eq_set} that $ L(G)+b\in\mathcal{G}_2(\omega) $. Then $ L(G)\in\mathcal{G}_2(\omega) $ by Remark \ref{re_gn} (iii), and hence $ G\in\mathcal{G}_n(\omega) $ by Lemma \ref{le_kz2} and \eqref{eq_rec}. Now the assertion follows from Definition \ref{de_gn}.

(b) $ -(1,0)\in\rec(L(G)) $: Then it follows from Lemma \ref{le_rec} (ii) and \eqref{eq_rec} that $ \rec(L(G))=\spann(\lbrace (1,0)\rbrace) $. Hence it is easy to find a bounded open interval $ I\subset\R $ such that $ L(G)=\R\times I $. Then we find a linear bijection $ Q:\R^2\to\R^2 $ and $ b\in\R^2 $ such that
\begin{align*}
Q(L(G))+b=\R\times(0,1).
\end{align*}
Then $ Q(L(G))+b\in\mathcal{G}_2(\omega) $ by Lemma \ref{le_kz1} and Lemma \ref{le_condition} (i), (ii). Hence $ Q(L(G))\in\mathcal{G}_2(\omega) $ by Remark \ref{re_gn} (iii). Now by Lemma \ref{le_rock} (applied to $ Q $) and \eqref{eq_rec} we have
\begin{align*}
\rec(Q(L(G)))=Q(\rec(L(G)))=Q(L(\rec(G)))
\end{align*}
and thus $ G\in\mathcal{G}_n(\omega) $ by Lemma \ref{le_kz2} (applied to $ Q\circ L $). Then the assertion follows from Definition \ref{de_gn}.
\end{proof}

\begin{acknowledgment}
I thank Luděk Zajíček for many helpful suggestions which improved this article.
\end{acknowledgment}

\end{document}